\setlist[itemize,enumerate]{labelindent=1em, leftmargin=*}
\renewcommand{\epsilon}{\varepsilon}
\renewcommand{\phi}{\varphi}
\bmdefine{\ba}{a}
\bmdefine{\bb}{b}
\bmdefine{\be}{e}
\bmdefine{\bE}{E}
\bmdefine{\bh}{h}
\bmdefine{\bg}{g}
\bmdefine{\bk}{k}
\bmdefine{\bp}{p}
\bmdefine{\bx}{x}
\bmdefine{\by}{y}
\bmdefine{\bs}{s}
\bmdefine{\bt}{t}
\bmdefine{\bu}{u}
\bmdefine{\bv}{v}
\bmdefine{\bw}{w}
\bmdefine{\bep}{\epsilon}
\newcommand{\R}{{\mathbb R}}
\newcommand{\RR}{{\mathcal R}}
\newcommand{\Q}{{\mathbb Q}}
\newcommand{\Z}{{\mathbb Z}}
\newcommand{\C}{{\mathbb C}}
\newcommand{\CC}{{\mathcal C}}
\newcommand{\N}{{\mathbb N}}
\newcommand{\NN}{{\mathcal N}}
\newcommand{\A}{{\mathcal A}}
\newcommand{\F}{{\mathcal F}}
\newcommand{\I}{{\mathcal I}}
\newcommand{\W}{{\mathcal W}}
\renewcommand{\Re}{{\mathrm{Re}}}
\newcommand{\NS}[1]{\NN\left( #1 \right)}
\newcommand{\Na}{\NS{\alpha}}
\newcommand{\Nb}{\NS{\beta}}
\newcommand{\Pisot}{{\mathcal{S}}}
\newcommand{\ov}{\overline}
\newcommand{\wi}{\widetilde}
\newcommand{\bP}{\boldsymbol{\Pi}}
\newcommand{\bS}{\boldsymbol{\Sigma}}
\newcommand{\bD}{\boldsymbol{\Delta}}
\newcommand{\base}{r}
\newcommand{\sB}{\mathcal{B}}
\newtheorem{defi}{DEFINITION}
\newtheorem{theorem}{THEOREM}
\newtheorem{lem}{LEMMA}
\newtheorem{fac}{FACT}
\newtheorem{prop}{PROPOSITION}
\newtheorem{prob}{PROBLEM}
\newtheorem{rem}{REMARK}
\newtheorem{ex}{Example}
\newcommand{\Romannum}[1]{\uppercase\expandafter{\romannumeral#1}}
\title{Borel Complexity of the set of vectors normal for a fixed recurrence sequence}
\begin{document}

\begin{abstract}
%\textcolor{red}{In this paper, we generalize the classical notion of normality for base-$b$ expansions to vectors. For instance, 
In this paper, we consider recurrence sequences $x_n=\xi_1 \alpha_1^n+\xi_2 \alpha_2^n$ ($n=0,1,\ldots$) with companion polynomial $P(X)$. For example, the sequence $x_n=\xi_1(4+\sqrt{2})^n+\xi_2(4-\sqrt{2})^n$ satisfies the recurrence $x_{n+2}-8x_{n+1}+14x_n=0$ and has companion polynomial $P(X)=X^2-8X+14=(X-4-\sqrt{2})(X-4+\sqrt{2})$. We call $(\xi_1,\xi_2)$ {\it normal with respect to the recurrence relation determined by $P(X)$} when $(x_n)_{n\ge 0}$ is uniformly distributed modulo one.
%In this paper we investigate the uniformity of recurrence sequences, e.g. $x_n=\xi_1\alpha_1^n+\xi_2\alpha_2^n$ ($n=0,1,\ldots,$), where $\alpha_1,\alpha_2$ are fixed real algebraic integers with $|\alpha_1|,|\alpha_2|>1$ and $(\xi_1,\xi_2)\in \R^2$. 
%When $(x_n)_{n\ge 0}$ is uniformly distributed modulo one, we call the vector $(\xi_1,\xi_2)$ {\bf normal for recurrence sequences.} 

 Determining the Borel complexity of the set of normal vectors for a fixed recurrence sequence is unresolved even for most geometric progressions. 
Under certain assumptions, we prove that the set of normal vectors is $\bP_3^0$-complete. 
 A special case is the new result that the sets of numbers normal in base $\alpha$, i.e. $\{\xi\in \R\mid (\xi\alpha^n)_{n\geq 0}\mbox{ is u.d. modulo one.} \}$, are $\bP_3^0$-complete for every real number $\alpha$ with $|\alpha|$ Pisot.
We analyze the fractional parts of recurrence sequences in terms of finite words via certain numeration systems. 
 One of the difficulties in proving the main result is that even when recurrence sequences are uniformly distributed modulo one, it is not known what the average frequencies of the digits in the corresponding digital expansions are or if they even must exist.
% it is not known whether the average frequency of the corresponding digit expansion exists.}
\end{abstract}

\thanks{The first author was supported by the JSPS KAKENHI Grant Number 24K06641. The second author was supported by grant 2019/34/E/ST1/00082 for the project ``Set theoretic methods in dynamics and number theory,'' NCN (The National Science Centre of Poland).}

\author[H. Kaneko]{Hajime Kaneko}
\address[H. Kaneko]{Institute of Mathematics, University of Tsukuba, 1-1-1 Tenodai, Tsukuba, Ibaraki, 305-8571, Japan}
\email{kanekoha@math.tsukuba.ac.jp}

\author[B. Mance]{Bill Mance}
\address[B. Mance]{Uniwersytet im. Adama Mickiewicza w Poznaniu,
  Collegium Mathematicum, ul. Umultowska 87, 61-614 Pozna\'{n}, Poland}
\email{william.mance@amu.edu.pl}

\maketitle

\section{Introduction}

Roughly speaking, a \emph{numeration system} assigns to each real number an \emph{expansion}. Here, an expansion is an infinite 
sequence of \emph{digits} coming from some at most countable set.  A real number is \emph{normal} in a numeration system if all 
\emph{asymptotic frequencies} of finite blocks of consecutive digits appearing in the expansion are \emph{typical} for the 
numeration system. To put some more content into this vague description recall that a real number $\xi$ is normal in base $2$ 
if in its binary expansion every block of digits of length $k$ appears with asymptotic frequency $1/2^k$. It follows that for 
every integer $\base\ge 2$ the set of normal numbers in base $\base$ is a first category set of full Lebesgue measure. Also, 
the normal numbers form a Borel set.  The paper~\cite{DSDB20} showed that sets of normal numbers for many commonly used numeration systems are all $\bP_3^0$-complete, but there are some important cases that don't follow from the main theorem of this paper. 

The initial motivation for our paper was to show that for every real number $\alpha$ with $|\alpha| > 1$ the set $\{\xi\in \R\mid (\xi\alpha^n)_{n\geq 0}\mbox{ is u.d. modulo one.} \}$ is $\bP_3^0$-complete . Unfortunately, this problem seems to be very difficult in general, but we were able to prove this for $\alpha$ where $|\alpha|$ is Pisot. Furthermore, we were able to investigate the completeness in the Borel hierarchy of the set of initial values of recurrence sequences which are uniformly distributed modulo one. For instance, we consider the case where $\alpha_1>1$ is an algebraic integer and suppose that $\alpha_2>1$ is the unique conjugate of $\alpha_1$ with absolute value not less than 1. 
Let $P(X)$ be the minimal polynomial of $\alpha_1$. Then we can show that the set $\{(\xi,\eta)\in \R^2\mid \xi\alpha_1^n+\eta\alpha_2^n\mbox{ is u.d. modulo one.} \}$,  called the set of {\it normal vectors with respect to recurrence relation defined by the companion polynomial $P(X)$,} is $\bP_3^0$-complete. For the definition of companion polynomials and normal vectors for  recurrence sequences, see Section \ref{section2}.
%Then we can show that the set $\{(\xi,\eta)\in \R^2\mid \xi\alpha_1^n+\eta\alpha_2^n\mbox{ is u.d. modulo one.} \}$, {\bf called the set of normal vectors for recurrence sequences,} is $\bP_3^0$-complete. For the definition of normal vectors for  recurrence sequences, see Section \ref{section2}.
% For more on numeration systems, including different views on that theory see \cite{BBLT,Sequences,Rigo}.

Knowing that the sets of normal numbers are Borel it is natural to gauge their complexity using the descriptive hierarchy of Borel sets.
In that hierarchy, the simplest Borel sets are
open ones and their complements (closed sets). On the next level,
there are countable intersections and countable unions of sets at the first level.
These are $G_{\delta}$ and $F_\sigma$ sets, and the third level is
formed by taking countable intersections and unions of sets at the second level. The procedure continues and provides a
stratification of the family of Borel sets into levels corresponding to countable ordinals. It is known that for
an uncountable Polish space
these levels do not collapse: at each level there appear new sets which do
not occur at any lower level of the hierarchy. Thus to every Borel set we can associate its complexity, that is, the lowest
level of the hierarchy at which the set is visible. On the other hand,
determining the position of ``naturally arising'' or ``non-ad hoc''
sets in the hierarchy is a challenging problem. Only a small number of concrete examples are known to
appear only above the third level.

A.~Kechris asked in the
90's whether the set of real numbers that are normal in base two is an
example of a Borel set properly located at the third level, which was
later confirmed by H.~Ki and T.~Linton in \cite{KiLinton}. More precisely, Ki and Linton showed that the set of numbers
that are normal in an integer base $\base\ge 2$ is a $\bP^0_3$-complete set, which means that this set is a countable intersection of $F_\sigma$ sets
and cannot be represented as a countable union of $G_\delta$-sets. Since then many authors have studied
the Borel complexity of various sets related to normal numbers, and
have extended this result in various directions \cite{AireyJacksonManceComplexityNormalPreserves,BecherHeiberSlamanAbsNormal,BecherSlamanNormal}.

A large number of cases were treated in \cite{DSDB20}. 
For each of the following classes of numeration systems, the respective set of normal numbers are $\bP^0_3$-complete
\begin{itemize}
    \item The regular continued fraction expansion
    \item The $\beta$ expansions for all real $\beta>1$
    \item The generalized GLS expansions. These include the $b$-ary expansions and the L\"{u}roth series expansion as special cases.
\end{itemize}
Recently, the set of complex numbers whose Hurwitz continued fractions are normal with respect to the complex Gauss measure was proved to be $\bP_3^0(\C)$-complete \cite{BorelHurwitz}.
Some types of normal numbers for other numeration system can not be considered in the above generality. The Cantor series expansions, often arising from a non-autonomous dynamical system, must be treated separately. Airey, Jackson, and Mance were able to determine the Borel complexity of various sets of normal numbers with respect to the Cantor series expansions in \cite{AireyJacksonManceComplexityCantorSeries}.
Additionally, while the notion of Poisson genericity pertains to $b$-ary expansions, the problem of determining the complexity of this set may not be determined by the main theorem of \cite{DSDB20}. Becher, Jackson, Kwietniak, and Mance were able determine the complexity of this set in \cite{DescPoisson}.

On the other hand, normal numbers in base $b$ are closely related to the distribution of the fractional parts of geometric sequences with common ratio of $b$. 
The fractional parts of geometric sequences with more general real numbers as their common ratio have also been studied by many mathematicians.
However, it is generally difficult to investigate the fractional parts of a geometric sequence by using numeration systems.
The main purpose of this paper is to investigate the Borel complexity of sets related to the uniformity of fractional parts of geometric sequences and more general recurrence sequences.

We now introduce the notation used throughout this paper.
For every real number $\xi$, there exists a unique pair of 
an integer $u(\xi)$ and a real number $e(\xi)\in [-1/2,1/2)$ such that 
$\xi=u(\xi)+e(\xi)$. 
Putting 
\[
\|\xi\|:=\min\{|\xi-m|\mid m\in \Z\},
\]
we have $\|\xi\|=|e(\xi)|$. For every complex number $z$, 
we denote its complex conjugate by $\ov{z}$. 
Moreover, for every polynomial $h(X)=\sum_{j=0}^{d} c_j X^j$ with 
complex coefficients, we put $\ov{h}(X):=\sum_{j=0}^{d} \ov{c_j} X^j$. 

%the set of irrational numbers normal with respect to regular continued fraction is $\bP^0_3$-complete.

\subsection{Distribution of geometric sequences modulo one}
Let $\mathbf{x}=(x_n)_{n\geq 0}$ be a sequence of real numbers. 
Recall that $\mathbf{x}$ is {\it uniformly distributed modulo one} if for every pair of real numbers $a,b$ with $-1/2\leq a<b\leq 1/2$, we have 
\[
\lim_{N\to\infty}\frac1{N}\mbox{Card}\{
n<N\mid e(x_n)\in [a,b)
\}=b-a.
\]
Weyl \cite{Weyl4} showed for every real number $\alpha>1$ that $(\xi\alpha^n)_{n\geq 0}$ is uniformly distributed modulo one for almost all real numbers $\xi$, which  also can be deduced from Koksma's metrical criterion of uniform distribution modulo one \cite{Ko35} (See also Lemma \ref{lem:koksma}). A similar result holds for every real number $\alpha$ with $\alpha<-1$. 
Set 
\[
\Na:=\{\xi\in \R\mid (\xi\alpha^n)_{n\geq 0}\mbox{ is u.d. modulo one.} \}
\]
Members of $\Na$ are called {\it normal in base $\alpha$.      }If $\alpha$ is an integer $b$, then $\NS{b}$ is the set of normal numbers in base $b$, which was proved by Wall \cite{Wall}. However, for general real numbers $\alpha$, it is difficult to characterize $\Na$ in terms of dynamical systems on compact spaces. For instance, $\xi\in \Na$ does not imply $\xi+1\in \Na$.
It is difficult to characterize $\Na$ via a suitable invariant measure. 

It is generally difficult to determine if a real number $\xi$ is normal in a fixed base $\alpha$.
%It is generally difficult for a fixed $\alpha$ and real numbers $\xi$ to determine whether $\xi\in \Na$ holds or not. 
Recall that an algebraic integer $\alpha>1$ is called a {\it Pisot number} if the conjugates of $\alpha$, except itself, have absolute values less than 1. Let $\Pisot$ be the set of Pisot numbers. Note that every integer greater than 1 is included in $\Pisot$. It is well known for every $\alpha\in \Pisot$ that $\lim_{n\to\infty} \|\alpha^n\|=0$. In particular, we have $1\not\in \Na$. It is unknown whether there exists $\alpha>1$ and nonzero $\xi$ such that $\alpha\not\in \Pisot $ and $\lim_{n\to\infty}\|\xi\alpha^n\|=0$. 

Fractional parts of geometric progressions whose common ratios are Pisot numbers were studied in \cite{Ak-Ka:21} related to word combinatorics. However, it is generally difficult to generalize such results for geometric progressions whose common ratios are general algebraic numbers because the formulae for fractional parts have restrictions (see \cite{Ka:09}). In \cite{AKK}, a connection between word combinatorics and the fractional parts of general recurrence sequences was discovered. Therefore, it is natural to study the fractional parts of recurrence sequences rather than those of geometric progressions. %relation between word combinatorics and fractional parts of general linear recurrences was discovered. So, it is natural to research the fractional parts of linear recurrences rather than geometric progressions. 
For instance, we consider the recurrence sequences $x_n=\xi_1(10+3\sqrt{2})^n+\xi_2(10-3\sqrt{2})^{n}$ with $\xi_1,\xi_2\in \R$, where $(x_n)_{n\in \Z}$ satisfies the recurrence relation $x_{n+2}-20x_{n+1}+82x_n=0$ for every $n\in \Z$. In particular, we consider the maximal limit points $\limsup_{n\to\infty} \|x_n\|$ to investigate the fractional parts of $x_n$ ($n\geq 0$). It is more natural to consider the fractional parts of the recurrence sequence $(x_n)_{n \in \Z}$ instead of those of a geometric progression $(\xi (10+3\sqrt{2})^n)_{n\in \Z}$ with $\xi\in \R$. In fact, the set 
\[S:=\left\{\left. \limsup_{n\to\infty} \|\xi_1(10+3\sqrt{2})^n+\xi_2(10-3\sqrt{2})^{n}\| \ \right| \ (\xi_1,\xi_2)\in \R^2\right\} \]
 has the analogous property of the Markoff-Lagrange spectrum.  For instance, $S$ is a closed subset of $[0,1/2]$. However, it is not known whether the set 
\[\left\{\left. \limsup_{n\to\infty} \|\xi(10+3\sqrt{2})^n\| \ \right| \ \xi\in \R\right\}\]
is closed. Since the formulae for the fractional parts of recurrence sequences are flexible, we can also treat the fractional parts of the real parts of recurrence sequences. For instance, \cite{AKK} treats the fractional parts $\Re ((\xi_1n+\xi_0)\alpha^n))$ ($n\geq 0$), where $\xi_1,\xi_0$ are arbitrary complex numbers and $\alpha\approx-1.1495 + 2.3165  \sqrt{-1}$ is a zero of the polynomial $X^3 +2 X^2 +6 X - 2$. \par
In Section \ref{section2}, we give the main results that describe the Borel hierarchy related to the uniformity of the fractional parts of more general recurrence sequences. 
In Subsection \ref{subsec:3-2} we recall numeration systems which represent the fractional parts of recurrence sequences. The difficult point is that even when a recurrence sequence is uniformly distributed modulo one, it is difficult to analyze the average frequency of words in the digit expansions. Even if the initial values $\bg_1,\bg_2$ give recurrence sequences which are uniformly distributed modulo one, it is not known whether the average frequencies for the corresponding digit expansions coincides or if they even all exist. So we cannot directly apply the method in \cite{DSDB20} although the specification property holds on $\W_0$, the set of digits in our setting. 
In Section \ref{section4} we will analyze the fractional parts of recurrence sequences in terms of finite words. Lemma~\ref{lem:3-1} will play a crucial role for Wadge reduction.
%\begin{comment}

%\textcolor{red}{%Fractional parts of general linear recurrences was investigated in \cite{AKK}. 
%For instance, the fractional parts of $$\xi_1(10+3\sqrt{2})^n+\xi_2(10-3\sqrt{2})^{n}, \quad n=0,1,\ldots$$
%are investigated. So it is natural to consider the set $(\xi_1,\xi_2)\in \R^2$ such that $(\xi_1(10+3\sqrt{2})^n+\xi_2(10-3\sqrt{2})^{n})$ 
%$n=0,1,\ldots$ is u.d. modulo one. In Section \ref{section2}, we consider fractional parts of more general linear recurrences.}

%\end{comment}

\subsection{Borel hierarchy}

We now recall some basic notions from descriptive set theory which gauge the complexity of sets in Polish spaces.
In any topological space $X$, the collection of Borel sets $\sB(X)$ is the smallest
$\sigma$-algebra containing all open sets. Elements of $\sB(X)$ are stratified into levels,
introducing the Borel hierarchy on $\sB(X)$, by defining $\bS^0_1(X)$ to be the family of open sets,
and $\bP^0_1(X)
= \{ X\setminus A\colon A \in \bS^0_1(X)\}$ to be the family of closed sets. For a countable ordinal $\alpha<\omega_1$
we let $\bS^0_\alpha(X)$ be the collection of
countable unions $A=\bigcup_n A_n$ where each $A_n \in \bP^0_{\alpha_n}(X)$
for some ordinal $\alpha_n<\alpha$. We also let $\bP^0_\alpha(X)= \{ X\setminus A\colon A \in \bS^0_\alpha(X)\}$.
Alternatively, $A\in \bP^0_\alpha(X)$ if $A=\bigcap_n A_n$ where
$A_n\in \bS^0_{\alpha_n}(X)$ and $\alpha_n<\alpha$ for each $n$.
We also set $\bD^0_\alpha(X) = \bS^0_\alpha(X) \cap \bP^0_\alpha(X)$ for each countable ordinal $\alpha<\omega_1$. We will write  $\bS^0_\alpha, \bP^0_\alpha, \hbox{ and } \bD^0_\alpha$ when the Polish space $X$ is clear.

Note that $\bS^0_2$ is the collection of $F_\sigma$ sets,  $\bP^0_2$
is the collection of $G_\delta$ sets, and $\bD^0_1$ is the collection of clopen subsets.
For any topological space, $\sB(X)=\bigcup_{\alpha<\omega_1} \bS^0_\alpha=
\bigcup_{\alpha<\omega_1}\bP^0_\alpha$.  It is easy to see that all of the collections
$\bD^0_\alpha$, $\bS^0_\alpha$, $\bP^0_\alpha$ are {\em pointclasses}, that is, they are closed
under inverse images of continuous functions.
Another basic fact is that for any uncountable Polish space $X$, there is no collapse
in the levels of the Borel hierarchy, that is, all the
pointclasses $\bD^0_\alpha$, $\bS^0_\alpha$, $\bP^0_\alpha$, for any ordinal $\alpha <\omega_1$,
are distinct (for a proof, see \cite{Kechris}). Thus, these levels of the Borel hierarchy can be used
to calibrate the descriptive complexity of a set. We say a set
$A\subseteq X$ is $\bS^0_\alpha$ (resp.\ $\bP^0_\alpha$)-{\em hard}
if $A \notin \bP^0_\alpha$ (resp.\ $A\notin \bS^0_\alpha$). This says $A$ is
``no simpler'' than a $\bS^0_\alpha$ set. We say $A$ is $\bS^0_\alpha$-{\em complete}
if $A\in \bS^0_\alpha\setminus \bP^0_\alpha$, that is, $A \in \bS^0_\alpha$ and
$A$ is $\bS^0_\alpha$-hard. This says $A$ is exactly at the complexity level
$\bS^0_\alpha$. Likewise, $A$ is $\bP^0_\alpha$-complete if $A\in \bP^0_\alpha\setminus\bS^0_\alpha$.

%The $\bs^1_1$ sets, or {\em analytic} sets, is the collection of continuous images of
%Borel sets (via continuous functions from a Polish space to a Polish space). The
%same collection results if one uses Borel images instead of continuous images, and
%one can also take just images of closed sets. The dual class  $\bp^1_1$, the {\em co-analytic}
%sets is the set of complements of analytic sets. We define $\bd^1_1=\bs^1_1\cap \bp^1_1$.
%The pointclasses $\bs^1_1$ and $\bp^1_1$ are closed under countable unions and intersections,
%and so contain all the Borel sets.
%A fundamental result of Suslin (see \cite{Kechris}) says that in any
%Polish space $\sB(X)=\bd^1_1$.

%% A fundamental result of Suslin (see \cite{Kechris}) says that in any
%% Polish space $\sB(X)=\bd^1_1=\bs^1_1 \cap \bp^1_1$, where
%% $\bp^1_1=\neg \bs^1_1$, and $\bs^1_1$ is the pointclass of continuous images
%% of Borel sets. Equivalently, $A \in \bs^1_1$ iff
%% $A$ can be written as $x \in a \leftrightarrow \exists y\ (x,y)\in B$
%% where $B \subseteq X\times Y$ is Borel (for some Polish space $Y$).
%% Similarly, $A\in \bp^1_1$ iff it is of the form $x\in A \leftrightarrow
%% \forall y\ (x,y)\in B$ for a Borel $B$. The $\bs^1_1$ sets are also called the {\em analytic}
%% sets, and $\bp^1_1$ the {\em co-analytic sets}. We also have
%% $\bs^1_1\neq \bp^1_1$ for any uncountable Polish space.

Let us now discuss our proofs.
In order to determine the exact position of a set $A$ in the Borel hierarchy one must prove
an \emph{upper bound}, that is one must write a condition defining $A$
which shows that it appears at some level in the hierarchy,
and then show a \emph{lower bound}, that is, show
that $A$ does not belong to any lower level in the hierarchy.
To establish a lower bound we use a technique known as ``Wadge
reduction''. It is based on the observation that our hierarchy levels
are all pointclasses %{\em pointclasses}, that is
%are closed under the operation of taking preimages through continuous
%functions.
Thus, for example,  a Borel set $A$ is $\bS^0_\alpha$-hard
if there are a Polish space $Y$, a Borel set $C\subseteq Y$ which is known to be $\bS^0_\alpha$-hard,
and a continuous function $f\colon Y\to X$ such that $f^{-1}(A)=C$. %Likewise
The same holds for the $\bP^0_\alpha$
classes. % as well as for $\bs^1_1$ and $\bP^1_1$.
Although the whole idea is plain and simple, the difficulty lies in the proper choice of the model space $Y$ and subset $C$, so that
it becomes possible to write down a definition of an appropriate continuous function.

\section{Main results}\label{section2}
%\textcolor{red}{Homework for Kaneko: 1. make main results clear for people not familiar with number theory. if $|\alpha|$ is a Pisot number, then $\NN_{P,k}=...,\NN_{P,0}=\Na$.\\
%2. Check the proof.\\
%3. Ask prof. Akiyama to check the paper.}\\
In this paper, a {\it recurrence sequence} $(x_n)_{n\in\Z}$ is a sequence of complex numbers such that there exists a positive integer $D$ and integers $A_D,A_{D-1},\ldots,A_0$ with $A_D=1,A_0\ne 0$ such that 
\begin{align}\label{eqn:recurrence_basic}
\sum_{\ell=0}^D A_{\ell} x_{n+\ell}=0
\end{align}
for every integer $n$. The polynomial $f(X)=\sum_{\ell=0}^D A_{\ell} X^{\ell}$ is called the {\it companion polynomial}. 
%For simplicity, we consider the case where $f(X)$ has the form $f(X)=P(X)^{k+1}$, where $P(X)=X^d+a_{d-1}X^{d-1}+\cdots+a_0\in \Z[X]$ ($d\ge 1, a_0\ne 0$) is a monic polynomial without multiple roots. 
For technical reasons, we assume the following throughout the paper: 
\begin{enumerate}[label=Assumption \arabic*., leftmargin=*]
\item $f(X)$ has the form $f(X)=P(X)^{k+1}$, where $k$ is a nonnegative integer and $P(X)=X^d+a_{d-1}X^{d-1}+\cdots+a_0\in \Z[X]$ ($d\ge 1, a_0\ne 0$) is a monic polynomial without multiple roots. 
\item For every complex number $z$ with modulus one, we have $P(z)\ne 0$. 
\end{enumerate}
%For any complex number $z$ with modulus one, we have $P(z)\ne 0$. 
Note that every irreducible factor 
$Q(X)\in \Z[X]\backslash\{0\}$ of $P(X)$ has at least one root 
with modulus greater than~1. 
Let $\alpha_1,\ldots,\alpha_d$ be the roots of $P(X)$, where $|\alpha_j|\ne 1,0$ for every $j=1,\ldots,d$. Then it is known that there exist polynomials $g_1(X),\ldots,g_d(X)\in \C[X]$ with $\deg g_j(X)\le k$ for $j=1,\ldots,p$ such that 
$x_n=\sum_{j=1}^d g_j(n)\alpha_j^n$
for every integer $n$ (see, for instance Chapter C of  \cite{ShTi1986} and Appendix F of \cite{BugeaudBook}). \par
We consider the uniformity of $e(\Re (x_n))$ ($n\ge 0$). Note that $(\Re (x_n))_{n\in\Z}$ also satisfies (\ref{eqn:recurrence_basic}). 
Instead of considering $(\Re (x_n))_{n\in\Z}$, we consider the case where $(x_n)_{n\in\Z}$ is a recurrence sequence of real numbers. If necessary, by changing the index, we may assume that $|\alpha_j|>1$ for $j=1,\ldots,p$ and $|\alpha_{\ell}|<1$ for $\ell=p+1,\ldots, d$, where $1\leq p\leq d$. Since $\sum_{j=q+1}^d g_j(n)\alpha_j^n$ does not affects the uniformity of $(e(x_n))_{n\ge 0}$, we may assume that $g_{q+1}(X)=\ldots=g_{d}(X)=0$. \par
Since $(x_n)_{n\in \Z}$ is a sequence of real numbers, we have by Lemma 1 (2) in \cite{AKK} the following: 
If $\alpha_j\in \R$ for $1\leq j\leq p$, then $g_j(X)\in \R[X]$. Moreover, if $1\leq j<\ell\leq p$ satisfy $\alpha_j=\ov{\alpha_{\ell}}$, then $g_j(X)=\ov{g_{\ell}(X)}$. We define $\Xi_k$ to be the set of tuples $(g_1(X),\ldots,g_p(X))\in \C[X]^p$ of polynomials of degree at most $k$ satisfying the above assumptions where for every $1\leq j<\ell\leq p$ with $\alpha_j=\ov{\alpha_{\ell}}\in \C\backslash \R$, we see that $g_{\ell}(X)$ is uniquely determined by $g_j(X)$. 
For instance, when $P(X)=X^3-2$, we have $\alpha_1=\sqrt[3]{2}, \alpha_2=\sqrt[3]{2}\zeta_3, \hbox{ and } \alpha_3=\ov{\alpha_2}$, where $\zeta_3$ is a primitive cubic root. Elements of $\Xi_k$ are vectors of the form $(g_1(X),g_2(X),\ov{g_2}(X))\in \R[X]\times \C[X]^2$, where $\deg g_1(X),\deg g_2(X)\le k$. 
Generally, we can identify $\Xi_k$ as $\R^{(k+1)r_1}\times \C^{(k+1)r_2}$ by considering the coefficients of $g_j(X)$, where $r_1=\mbox{Card}\{j\mid 1\leq j\leq p, \alpha_j\in \R\}$ and $r_2=(p-r_1)/2$. 
Thus, we introduce a topology on $\Xi_k$ by identifying $\Xi_k$ with the Euclidean space $\R^{(k+1)r_1}\times \C^{(k+1)r_2}$. 
Conversely, For every $\bg=(g_1(X),\ldots,g_p(X))\in \Xi_k$, we define the recurrence sequence $(x_n(\bg))_{n\in\Z}$ satisfying (\ref{eqn:recurrence_basic}) by 
\[
x_n(\bg)=\sum_{j=1}^p g_j(n) \alpha_j^{n}.
\]
Using Lemma 1 (2) in \cite{AKK} again, we see that $(x_n(\bg))_{n\in\Z}$ is a recurrence sequence of real numbers satisfying (\ref{eqn:recurrence_basic}). Let  
\[\bx^{+}(\bg):=(x_n(\bg))_{n\geq 0}.\] 
For convenience, we call $\bg$ the initial value for $(x_n(\bg))_{n\in\Z}$ throughout this paper. Set 
\[
\NN_{P,k}:=
\{
\bg\in\Xi_k\mid \bx^{+}(\bg)
\mbox{ is uniformly distributed modulo 1}
\}.
\]
We call any element $\bg\in \NN_{P,k}$ a { \it normal vector} for the recurrence sequence with companion polynomial $f(X)$.
For instance, when $P(X)$ is the minimal polynomial of $\alpha$, where $|\alpha|$ is a Pisot number, we have $p=1$ and we see that 
\begin{align*}
\Xi_k&=\left\{ \left. \sum_{j=0}^k\xi_j X^j
\ \right| \
\xi_0,\xi_1,\ldots,\xi_k\in \R
\right\},
\\
\NN_{P,k}&=
\left\{\sum_{j=0}^k\xi_j X^j\in \Xi_k
\ \left| \ 
\left(\sum_{j=0}^k \xi_j n^j \alpha^n\right)_{n\geq 0}
\mbox{is u.d. modulo 1}\right.\right\}.
\end{align*}
In particular, we have $\NN_{P,0}=\Na$. 
In the case where \[f(X)=P(X)=X^2-8X+14=(X-4-\sqrt{2})(X-4+\sqrt{2}) \ (k=0),\]
elements $(\xi_1,\xi_2)$ of $\NN_{P,0}\subseteq \Xi_0=\R^2$ are normal vectors for recurrence sequences with companion polynomial $f(X)$. 
We may now state our main theorem.
\begin{theorem}\label{theo:1-1}
The set $\NN_{P,k} \subseteq \Xi_k$ is $\bP_3^0(\Xi_k)$-complete assuming that $P$ and $k$ satisfy Assumption~1 and Assumption~2.
\end{theorem}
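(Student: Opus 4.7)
My plan is to derive $\NN_{P,k} \in \bP_3^0(\Xi_k)$ directly from Weyl's equidistribution criterion: $\bg \in \NN_{P,k}$ if and only if for every $h \in \Z \setminus \{0\}$,
\[
\lim_{N \to \infty} \frac{1}{N}\sum_{n=0}^{N-1} \exp(2\pi i h\, x_n(\bg)) = 0.
\]
Since each map $\bg \mapsto x_n(\bg)$ is continuous on $\Xi_k$, the set
\[
A_{h,m,N_0} = \bigcap_{N \geq N_0}\!\left\{\bg \in \Xi_k : \left|\tfrac{1}{N}\sum_{n=0}^{N-1}\exp(2\pi i h\, x_n(\bg))\right| \leq \tfrac{1}{m}\right\}
\]
is closed, and the identity
\[
\NN_{P,k} = \bigcap_{h\neq 0}\bigcap_{m\geq 1} \bigcup_{N_0 \geq 1} A_{h,m,N_0}
\]
exhibits $\NN_{P,k}$ as a countable intersection of $F_\sigma$ sets, hence a $\bP_3^0(\Xi_k)$ set.

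\textbf{Lower bound.} For $\bP_3^0$-hardness I would carry out a Wadge reduction from a classical $\bP_3^0$-complete subset of Cantor space, for instance
\[
C = \left\{c \in 2^\N : \lim_{N \to \infty} \frac{1}{N}\sum_{n<N} c_n = 0\right\}.
\]
The goal is a continuous map $\phi\colon 2^\N \to \Xi_k$ such that $\phi^{-1}(\NN_{P,k}) = C$. The strategy is to use the numeration system for recurrence sequences recalled in Subsection 3.2 to encode the bits of $c$ into long, widely separated blocks of the digit expansion of $x_n(\phi(c))$, in such a way that: (a) when $c$ has density zero, the encoded blocks occupy a set of indices of vanishing density and all Weyl exponential sums tend to $0$; (b) when $c$ does not have density zero, a positive proportion of indices $n$ fall into encoded blocks designed to produce, through Lemma~3.1, a persistent bias in some exponential sum, so that $x_n(\phi(c))$ fails to be uniformly distributed. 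Continuity of $\phi$ is guaranteed by making only the first few coefficients of $\phi(c)$ depend on each initial segment of $c$, with perturbations that shrink geometrically with the prefix length.

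\textbf{Main obstacle.} The principal difficulty, highlighted already in the abstract, is that one cannot pass through digit-frequency normality as in \cite{DSDB20}: for our numeration system it is not even known whether the limiting frequencies of finite words in the digit expansion of $x_n(\bg)$ exist for all $\bg \in \NN_{P,k}$, still less whether they agree across initial values. Hence the reduction has to be carried out analytically, at the level of the Weyl sums themselves, rather than combinatorially at the level of the digits. Lemma~3.1 is the bridge: it converts the combinatorial choice of admissible blocks of digits for $\phi(c)$ (which is where continuity is easy to verify and where the specification property on the digit set $\W_0$ is used) into quantitative estimates on $\exp(2\pi i h\, x_n(\phi(c)))$. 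Designing the block-length schedule, the encoding alphabet, and the "filler" digits so that these estimates are compatible with both halves of the reduction equivalence, while respecting Assumptions~1 and~2, will be the delicate technical heart of the argument.
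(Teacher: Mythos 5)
Your upper bound via Weyl's criterion is correct (the paper treats this direction as immediate), and your overall plan for the lower bound --- a Wadge reduction built on the numeration system of Subsection 3.2 and on Lemma~\ref{lem:3-1} --- is in the same spirit as the paper's argument. But as written the reduction has two genuine gaps. The first is that it needs a source of ``generic filler'': to make $\phi(c)$ land in $\NN_{P,k}$ when $c$ has density zero, the encoded blocks must be interleaved with digit strings whose associated fractional parts equidistribute, and the only way to produce such strings is as prefixes of $\Psi(\bg)$ for some $\bg\in\NN_{P,k}$. You never establish that $\NN_{P,k}\neq\emptyset$. Under Assumptions 1 and 2 this is not automatic: when the dominant roots are non-real it requires the argument of Section 3, combining Koksma's metric theorem (Lemma~\ref{lem:koksma}) with the choice of $\xi_0$ in Lemma~\ref{lem:argument} and the Fuchs--Heintze lower bound for multi-recurrences. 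Without this, clause (a) of your reduction cannot be realized at all.

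The second gap is the asserted equivalence between ``$c\in C$'' and ``the encoded blocks occupy indices of vanishing density,'' which does not hold for free. The filler blocks must have lengths tending to infinity (concatenating copies of one fixed finite prefix of $\Psi(\bg)$ is only approximately generic, with error depending on the prefix length), and once block lengths grow, the index-density of the encoded blocks is a \emph{weighted} density of the bits of $c$, which is not equivalent to the ordinary density defining your set $C\subseteq 2^{\N}$; your sketch gives no mechanism to reconcile the two. The paper avoids precisely this by reducing instead from $\CC_3=\{\beta\in\N^{\N}:\beta(n)\to\infty\}$ and letting $\beta'(n)=\min\{n,\beta(n)\}$ set the length ratio between the all-zero (biased) block and the generic block at stage $n$, so the biased proportion at stage $n$ is essentially $1/(1+\beta'(n))$, which tends to $0$ exactly when $\beta\in\CC_3$. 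Moreover, even with the densities arranged correctly, the positive half of the reduction still needs a quantitative criterion guaranteeing that the resulting concatenation is generic; this is Lemma~\ref{lem:3-2} (dominating and asymptotically stable sequences, verified in Lemmas~\ref{lem:4-3} and~\ref{lem:4-1}), which is the technical core of the proof and which your proposal defers entirely. Finally, a smaller point: Lemma~\ref{lem:3-1} compares interval-counting functions $\lambda(\cdot,I;N)$, not exponential sums, so the paper's reduction is carried out combinatorially on fractional-part counts rather than ``analytically at the level of the Weyl sums''; that framing difference is harmless, but the two missing ingredients above are not.
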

%\begin{rem}\label{rem:1-1}
%We can remove Assumption 5 on $P(X)$ if $\NN_{P,k}$ is nonempty. 
%\end{rem}
Theorem~\ref{theo:1-1} has several interesting consequences, including partially solving one of the original questions motivating this paper.
\begin{ex}\label{exam:1-1}
Let $\alpha$ be a real number such that $|\alpha|$ is Pisot. Then, for every 
$k\geq 0$, the set
\[
\left\{(\xi_0,\xi_1,\ldots,\xi_k)\in \R^{k+1}
\ \left| \ 
\left(\sum_{j=0}^k \xi_j n^j \alpha^n\right)_{n\geq 0}
\mbox{is u.d. modulo 1}\right.\right\}
\]
is $\bP_3^0(\R^{k+1})$-complete. In particular, the set of numbers normal in base $\alpha$ is $\bP_3^0(\R)$-complete.
\end{ex}

\begin{ex}\label{exam:1-2} The sets 
\begin{align*}
&\{(\xi,\eta)\in \R^2\mid 
(\xi(4+\sqrt{2})^n+\eta(4-\sqrt{2})^n)_{n\geq 0}
\mbox{ is u.d. modulo 1}
\}\\
&\{(\xi_0,\xi_1,\eta_0,\eta_1)\in \R^4\mid 
((\xi_0+\xi_1n)(4+\sqrt{2})^n+(\eta_0+\eta_1n)(4-\sqrt{2})^n)_{n\geq 0}
\mbox{ is u.d. modulo 1}
\}
\end{align*}
are $\bP_3^0(\R^2)$-complete and $\bP_3^0(\R^4)$-complete, respectively. 
\end{ex}
\begin{ex}\label{exam:1-3} The sets 
\begin{align*}
&\{\xi\in \C\mid 
(\xi(4+3\sqrt{-1})^n+
\ov{\xi}(4-3\sqrt{-1})^n
)_{n\geq 0}
\mbox{ is u.d. modulo 1}
\}
\\
&\{(\xi_0,\xi_1)\in \C^2\mid 
((\xi_0+\xi_1n)(4+3\sqrt{-1})^n+
(\ov{\xi_0}+\ov{\xi_1}n)(4-3\sqrt{-1})^n
)_{n\geq 0}
\mbox{ is u.d. modulo 1}
\}
\end{align*}
are $\bP_3^0(\C)$-complete and $\bP_3^0(\C^2)$-complete, respectively. 
\end{ex}

\begin{rem}
%    In the same way as the proof of the fact that the set of 
%    normal numbers in base $b$ is in $\bP_3^0$, 
%    we can show that $\NN_{P,k}$ is in $\bP_3^0$.
    It is obvious that $\NN_{P,k}$ is in $\bP_3^0(\Xi_k)$. In order to show that $\NN_{P,k}$ is $\bP_3^0(\Xi_k)$-complete, we will use Wadge reduction to show that it is $\bP_3^0(\Xi_k)$-hard.
\end{rem}
%We sketch rough idea for the proof of Theorem~\ref{theo:1-1}. Let %$\CC:=\N^{\N}$ and 
%\[
%\CC_3:=\left\{(\beta(n))_{n\geq 1}\in \CC
%\ \left| \
%\lim_{n\to\infty}\beta(n)=\infty\right.\right\},
%\]
%which is well known to be $\bP_3^0$-complete. 
%We construct a continuous map from $\CC$ to $\Xi_k$ such that $\pi^{-1}(\NN_{P,k})=\CC_3$.
\section{Preliminaries}
In the rest of this paper, we fix $P(X)$ and $k\ge 0$. 
\subsection{Proof that $\NN_{P,k}$ is non-empty}
Weyl \cite{Weyl4} proved that for every real number $\alpha>1$, the sequence 
$(\xi \alpha^n)_{n\ge 0}$ is u.d. modulo one  
for almost all $\xi\in\R$. 
Many mathematicians have subsequently investigated 
multidimensional extensions of Weyl's theorem. 
For the proof of our results, it suffices to establish that $\NN_{P,k}$ in Theorem \ref{theo:1-1} is not empty. 
In this section we confine ourselves to proving this fact.
First we recall a simple version of 
metrical results of uniformly distribution by 
Koksma as follows: 
\begin{lem}[{Koksma \cite{Ko35}}]\label{lem:koksma}
    Let $(a_n)_{n\geq 1}$ be a sequence of real numbers satisfying the following: 
    there exists a positive constant $\delta$ such that, for any distinct positive integers $n$ and $m$, we have 
    \[|a_n-a_m|>\delta.\]
    Then, for almost every real number $\eta$, we have $(\eta a_n)_{n\geq 1}$ is uniformly distributed modulo one. 
\end{lem}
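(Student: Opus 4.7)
The plan is to invoke Weyl's criterion and bound the relevant exponential sums in mean square, then interpolate from a geometric subsequence of times.

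First I would fix a nonzero integer $h$ and a compact interval $I=[-K,K]$, and study
\[
S_N(\eta)=\sum_{n=1}^N e(h\eta a_n),\qquad e(t):=\exp(2\pi i t).
\]
Expanding $|S_N|^2$ and integrating gives
\[
\int_I |S_N(\eta)|^2\,d\eta = 2KN+\sum_{\substack{1\le n,m\le N\\ n\ne m}}\int_I e(h\eta(a_n-a_m))\,d\eta,
\]
and each off-diagonal integral is bounded in modulus by $1/(\pi|h||a_n-a_m|)$. The separation hypothesis $|a_n-a_m|>\delta$ ($n\ne m$) implies that, for fixed $n$, the values $\{a_m-a_n:m\ne n\}$ are pairwise $\delta$-separated and bounded away from $0$ by $\delta$, so the $(N-1)$ smallest of them in modulus exceed a constant multiple of $k\delta$ for $k=1,\dots,N-1$. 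Summing, $\sum_{m\ne n}1/|a_n-a_m|=O_\delta(\log N)$, hence
\[
\int_I \Bigl|\tfrac{1}{N}S_N(\eta)\Bigr|^2 d\eta = O_{h,\delta,K}\!\left(\frac{\log N}{N}\right).
\]

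Next I would interpolate via a geometric subsequence to pass from mean-square decay to almost-everywhere convergence. Fix $\epsilon>0$ and set $N_j=\lfloor(1+\epsilon)^j\rfloor$. Since $\sum_j (\log N_j)/N_j<\infty$, the Borel--Cantelli lemma applied to the indicators $\{|S_{N_j}/N_j|>\epsilon\}$ yields $S_{N_j}(\eta)/N_j\to 0$ for almost every $\eta\in I$. For $N_j\le N\le N_{j+1}$ the trivial bound $|S_N-S_{N_j}|\le N-N_j\le\epsilon N_j$ gives
\[
\Bigl|\tfrac{1}{N}S_N(\eta)\Bigr|\le \Bigl|\tfrac{1}{N_j}S_{N_j}(\eta)\Bigr|+\epsilon,
\]
so $\limsup_N |S_N(\eta)/N|\le\epsilon$ a.e.\ on $I$. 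Intersecting the resulting full-measure sets over $\epsilon=1/k$, $k\in\N$, gives $S_N(\eta)/N\to 0$ for a.e.\ $\eta\in I$; intersecting further over $K\in\N$ and $h\in\Z\setminus\{0\}$ produces a single full-measure set on which Weyl's criterion is verified, so $(\eta a_n)_{n\ge 1}$ is uniformly distributed modulo one.

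The main technical point to get right is the combinatorial estimate $\sum_{m\ne n, m\le N}1/|a_n-a_m|=O(\log N)$, which is where the separation hypothesis is actually used; the rest is standard Weyl-type analysis plus the routine subsequence-and-interpolation step. Everything else reduces to Fubini and Borel--Cantelli, so I would not expect to encounter obstacles beyond presenting these estimates cleanly.
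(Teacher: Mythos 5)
Your argument is correct, and it is essentially the classical one: the paper does not prove this lemma at all --- it is quoted as a known result of Koksma \cite{Ko35} --- and the standard proof of that metric theorem is exactly the route you take. Your key estimate is sound: for fixed $n$ the differences $a_m-a_n$ ($m\ne n$, $m\le N$) are pairwise $\delta$-separated and of modulus $>\delta$, so at most $2t/\delta+1$ of them lie in $[-t,t]$, whence the $k$-th smallest modulus is $\gg k\delta$ and $\sum_{m\ne n}|a_n-a_m|^{-1}=O_\delta(\log N)$; this gives $\int_I|S_N/N|^2\,d\eta=O_{h,\delta,K}(\log N/N)$, and Chebyshev plus Borel--Cantelli along $N_j=\lfloor(1+\epsilon)^j\rfloor$, the trivial interpolation $|S_N-S_{N_j}|\le N-N_j$, and the countable intersection over $h$, $K$ and $\epsilon=1/k$ verify Weyl's criterion almost everywhere. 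Two cosmetic points only: Borel--Cantelli at the fixed threshold $\epsilon$ yields $\limsup_j|S_{N_j}(\eta)/N_j|\le\epsilon$ rather than convergence to $0$ (which is all you subsequently use, so nothing breaks), and your notation $e(t)=\exp(2\pi i t)$ clashes with the paper's use of $e(\xi)$ for the signed fractional part, so it should be renamed if this proof were inserted into the text.
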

Let $\alpha$ be a zero of $P(X)$ with $|\alpha|>1$. When $\alpha$ is an imaginary number, let $K$ be a number field satisfying the following: 
\begin{enumerate}
    \item $\alpha,\sqrt{-1}\in K$.
    \item $K$ is a Galois extension of $\Q$.
\end{enumerate}
First we show the following: 
\begin{lem}\label{lem:argument}
    Suppose that $\alpha$ is imaginary. Then there exists an algebraic number $\xi_0$ such that for every $z\in K\backslash\{0\}$, we have $\xi_0 z\not\in \R$. 
\end{lem}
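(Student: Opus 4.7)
My plan is to look for $\xi_0$ of the form $\theta + i$ with $\theta$ a carefully chosen real algebraic number.

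The first step would be to identify the maximal real subfield $L := K \cap \R$. Because $K$ is Galois over $\Q$ and contains the imaginary number $\alpha$, complex conjugation restricts to a nontrivial element of $\mathrm{Gal}(K/\Q)$, and its fixed field is exactly $L$. Since moreover $i \in K \setminus L$, this forces $[K : L] = 2$ and $K = L(i)$, so every $z \in K$ has a unique representation $z = a + b\,i$ with $a, b \in L$. This is the key structural observation.

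The second step would be to pick a real algebraic number $\theta$ that does not lie in $L$. Such a choice is possible because $[L : \Q]$ is finite while there exist real algebraic numbers of arbitrarily large degree (for instance $\sqrt[n]{2}$ with $n > [L : \Q]$). Setting $\xi_0 := \theta + i$, which is algebraic as the sum of two algebraic numbers, the verification reduces to a short computation: for $z = a + b\,i \in K$ with $a, b \in L$,
\[
\xi_0\, z \;=\; (\theta a - b) \,+\, (\theta b + a)\,i,
\]
so $\xi_0\, z \in \R$ forces $\theta b + a = 0$. If $b = 0$ this gives $a = 0$ and hence $z = 0$; otherwise $\theta = -a/b \in L$, contradicting the choice of $\theta$.

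No serious obstacle arises. The only conceptual ingredient is recognizing that the Galois hypothesis on $K$, combined with $i \in K$ and $\alpha \notin \R$, pins down the decomposition $K = L(i)$. Once this is in hand, the construction of $\xi_0$ is essentially forced by the linear-algebraic picture: the set of $\xi \in \C$ with $\xi z \in \R$ for some $z \in K \setminus \{0\}$ is a countable union of real lines through the origin (one for each $z \in K \setminus \{0\}$, up to real rescaling), and any $\xi_0 = \theta + i$ with $\theta \in (\R \cap \ov{\Q}) \setminus L$ avoids all of them.
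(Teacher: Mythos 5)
Your proof is correct, but it takes a genuinely different route from the paper. The paper argues by contradiction using cyclotomy: assuming no $\xi_0$ works, for each odd prime $p$ one finds $z_p\in K\setminus\{0\}$ with $t_p:=\zeta_p^{-1}z_p$ real, writes $z_p=x_p+y_p\sqrt{-1}$ with $x_p,y_p\in K$ (using that $K$ is Galois, hence stable under complex conjugation, and contains $\sqrt{-1}$), notes $t_p^2=x_p^2+y_p^2\in K$ so that $[K(t_p):K]\le 2$, and then $\zeta_p=z_pt_p^{-1}\in K(t_p)$ forces $[\Q(\zeta_p):\Q]\le 2[K:\Q]$ for all odd primes $p$, which is absurd. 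You instead exploit the same structural input (stability of $K$ under conjugation plus $\sqrt{-1}\in K$) to get the decomposition $K=L(\sqrt{-1})$ with $L=K\cap\R$ of finite degree, and then directly exhibit $\xi_0=\theta+\sqrt{-1}$ with $\theta$ a real algebraic number outside $L$ (e.g.\ $\sqrt[n]{2}$ with $n>[L:\Q]$); the verification that $\xi_0 z\in\R$ forces $z=0$ or $\theta\in L$ is a two-line computation. Your argument is shorter, constructive (it names an explicit $\xi_0$), and in fact needs only that the real and imaginary parts of each $z\in K$ lie in the finite-degree field $L$, not the full equality $K=L(\sqrt{-1})$; the paper's argument, while indirect, shows the complementary fact that the roots of unity $\zeta_p$ themselves eventually serve as valid choices of $\xi_0$ once $p-1>2[K:\Q]$. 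Both are complete proofs of the lemma.
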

\begin{proof}
    For contradiction, we assume that there does not exist $\xi_0$ in Lemma \ref{lem:argument}. That is, for every algebraic number $\xi$, there exists $z\in K\backslash\{0\}$ such that $\xi z\in \R$. 
    In particular, for every odd prime number $p$, put $\zeta_p:=\exp(2\pi i/p)$. Then there exists $z_p\in K\backslash\{0\}$ such that $t_p:=\zeta_p^{-1} z_p$ is a positive real number. 
    Now put $z_p=:x_p+y_p\sqrt{-1}$ with $x_p,y_p\in \R$. Since $\sqrt{-1}, z_p\in K$ and $K$ is a Galois extension of $\Q$, we see that 
    \[x_p=\frac{z_p+\ov{z_p}}{2}\in K, \quad y_p=\frac{z_p-\ov{z_p}}{2\sqrt{-1}}\in K.\]
    Since \[t_p^2=|\zeta_p^{-1} z_p|^2=x_p^2+y_p^2\in K,\] we see 
    $[K(t_p):K]\leq 2$, where $[K(t_p):K]$ represents the degree of the field extension. Since $\zeta_p=z_p t_p^{-1}\in K(t_p)$, we get 
    \[[\Q(\zeta_p):\Q]\leq [K(t_p):\Q]\leq 2[K:\Q]\]
    for every odd prime number $p$, a contradiction. 
\end{proof}
In what follows, we will show that $\NN_{P,k}$ is non-empty by applying Lemma \ref{lem:koksma}. 
Put \begin{align*}a_n=
    \begin{cases}
        \alpha^n & (\alpha\in \R), \\
        \xi_0 \alpha^n+\ov{\xi_0} \ov{\alpha}^n & (\alpha\not\in \R),
    \end{cases}
\end{align*}
where $\xi_0$ is defined in Lemma \ref{lem:argument}. It suffices to show that $(a_n)_{n\geq 1}$ satisfies the assumption of Lemma \ref{lem:koksma}. In fact, Lemma \ref{lem:koksma} implies that $(\eta a_n)_{n\geq 1}$ is uniformly distributed modulo one for some real number $\eta$. 
When $\alpha\in \R$,  the assumption of Lemma~\ref{lem:koksma} is clear because $|\alpha|> 1$. In what follows, we assume that $\alpha\not\in \R$. \par
We will apply results on the growth of multi-recurrences in \cite{FH22}. 
Let 
\begin{align*}
f(n_1,n_2)&:=
\xi_0 \alpha^{n_1}\alpha^{n_2}+
\ov{\xi_0}\ov{\alpha}^{n_1}\ov{\alpha}^{n_2}
-\xi_0 \alpha^{n_1}\cdot 1^{n_2}-
\ov{\xi_0}\ov{\alpha}^{n_1}\cdot 1^{n_2}
\\
&=:
F_1(n_1,n_2)+F_2(n_1,n_2)+F_3(n_1,n_2)+F_4(n_1,n_2).
\end{align*}
By the assumption on $\xi_0$, we can check for any subset $I\subseteq \{2,3,4\}$ that 
\[
F_1(n_1,n_2)+\sum_{j\in I}F_j(n_1,n_2)\ne 0
\]
for any positive integers $n_1$ and $n_2$. 
For instance, suppose on the contrary that 
\[
F_1(n_1,n_2)+\sum_{j\in\{2,3,4\}}F_j(n_1,n_2)= 0.
\]
Then we have 
\[
\frac{\xi_0 (\alpha^{n_1+n_2}-\alpha^{n_1})}{\sqrt{-1}}\in \R,
\]
which contradicts the assumption on $\xi_0$. 
Thus, Theorem 1 in \cite{FH22} implies for every positive real number $\epsilon$ that 
there exists a positive constant $C(\epsilon)$ such that for any positive numbers $n_1$ and $n_2$ with $n_1+n_2\geq C(\epsilon)$, we have 
\[
|f(n_1,n_2)|\geq |\xi_0||\alpha|^{n_1+n_2}\exp(-\epsilon(n_1+n_2)).
\]
In particular, there exists a positive constant $C'$ such that, for all positive integers $n$ and $m$ with $\max\{n,m\}\geq C'$, we have 
\[
|a_n-a_m|\geq |\xi_0|\sqrt{|\alpha|}^{\max\{n,m\}}.
\]
Recalling that 
\[|a_n-a_m|\ne 0\]
for every distinct pair of positive integers $n$ and $m$ with $\max\{n,m\}< C'$, we see that there exists a positive constant $\delta$ satisfying the assumption of Lemma \ref{lem:koksma}. 
Therefore, we have proven that 
$\NN_{P,k}$ is non-empty.  
\subsection{Representations of fractional parts by using infinite words}\label{subsec:3-2}
In this subsection we introduce numeration systems connecting fractional parts of recurrence sequences $(x_n(\bg))_{n\in\Z}$ with initial values $\bg\in\Xi_k$ and the digit sequence $\bs$, where the set of digit sequences is a countable union of one-sided full shift spaces densely embedded in the two-sided full shift (see (\ref{eqn:fullshift})). First, we give formula (\ref{eqn:2-3}) for the fractional parts. This formula is denoted in terms of the series $\phi_m(\bs)$, where we use $\rho_m^{f}$ for $\phi_m(\bs)$ instead of $b^{-m}$ in the base-$b$ expansion with integer base $b$. Next, for every digit sequence $\bs$, we give an initial value $\theta(\bs)\in \Xi_k$ which realizes the digit sequence $\bs$ in the sense that the fractional parts of $x_m(\theta(\bs))$ represent $\phi_m(\bs)$ (see (\ref{eqn:2-4})). Conversely, for every initial value $\bg\in\Xi_k$, we construct a canonical digit sequence $\Psi(\bg)=(s_m(\bg))_{m\in \Z}$ which realizes the fractional parts of $(x_n(\bg))_{n\in\Z}$ in the sense that $\phi_m(\Psi(\bg))=e(x_m(\bg))$ (see (\ref{eqn:canonical})). 
In (\ref{eqn:2-14}) (see also Figure \ref{fig:commute}) we prove the commutative diagram related to the shift map $\sigma$ for the digit sequence $\bs$ and the map $\tau$ define by (\ref{eqn:2-11}). \par
Recall that $f(X)=\sum_{j=0}^DA_j X^j=P(X)^{k+1}$. In particular,  
$D=(k+1)d$, $A_D=1$, and $A_0\ne 0$. 
Let $\bg=(g_1(X),\ldots,g_p(X))\in \Xi_k$. 
Then $(x_n(\bg))_{n\in\Z}$ satisfies (\ref{eqn:recurrence_basic}), that is, for every integer $n$, we have  %We have for any integer $n$ that 
\begin{align}\label{eqn:2-1}
\sum_{\ell=0}^D A_{\ell} x_{n+\ell}(\bg)
%=
%\sum_{j=1}^p \sum_{\ell=0}^{D}A_{\ell} g_j(n+\ell) \alpha_j^{n+\ell}
=0.
\end{align}
%by $\deg g_j(X)\leq k$ for $j=1,\ldots, p$. 
Put 
\[
\iota(\bg):=(x_n(\bg) \pmod{\Z})_{n\in \Z}\in (\R/\Z)^{\Z}
\]
and 
\[
\RR_k:=\{\iota(\bg)\mid \bg\in \Xi_k\}.
\]
\begin{prop}\label{prop:2-1}
The map $\iota: \Xi_k\to \RR_k$ is bijective. 
\end{prop}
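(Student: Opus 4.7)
The plan is to establish surjectivity and injectivity separately. Surjectivity is immediate: $\RR_k$ is defined as the image $\iota(\Xi_k)$, so every element of $\RR_k$ has a preimage by construction. All the work lies in proving injectivity of the linear map $\bg \mapsto (x_n(\bg) \pmod \Z)_{n\in\Z}$.

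For injectivity, suppose $\iota(\bg) = \iota(\bh)$ for some $\bg,\bh \in \Xi_k$. Because $\Xi_k$ is a real vector space (closed under addition and scalar multiplication), the difference $\bw := \bg - \bh$ again lies in $\Xi_k$, and by linearity of $\bg \mapsto x_n(\bg)$ we obtain
\[
y_n := x_n(\bw) = x_n(\bg) - x_n(\bh) \in \Z \quad\text{for every } n \in \Z.
\]
So it suffices to show that $y_n \in \Z$ for all $n \in \Z$ forces $\bw = 0$. The first step is an asymptotic observation: since $|\alpha_j| > 1$ for every $j = 1,\ldots,p$ and each $w_j(X)$ has degree at most $k$, we have $|w_j(n)\alpha_j^n| = O(|n|^k |\alpha_j|^n)$, which tends to $0$ as $n \to -\infty$. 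Hence $y_n \to 0$ as $n \to -\infty$, and because $y_n \in \Z$, there exists $N$ with $y_n = 0$ for all $n \leq N$.

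The next step is to upgrade this to $y_n \equiv 0$ on all of $\Z$. The sequence $(y_n)_{n\in\Z}$ satisfies the recurrence \refeq{recurrence_basic}, and this recurrence is invertible in both directions: $A_D = 1$ lets us solve for $y_{n+D}$ in terms of $y_n,\ldots,y_{n+D-1}$, and $A_0 \ne 0$ lets us solve for $y_n$ in terms of $y_{n+1},\ldots,y_{n+D}$. Applying these recursions starting from a block of $D$ consecutive zeros within $\{n \le N\}$ propagates the zeros both directions, giving $y_n = 0$ for all $n \in \Z$. Finally, by Assumption~1 the roots $\alpha_1,\ldots,\alpha_p$ are distinct, so the classical linear independence of the quasi-polynomial sequences $\{(n^m \alpha_j^n)_{n\in\Z} : 1 \leq j \leq p,\ 0 \leq m \leq k\}$ over $\C$ (equivalently, uniqueness of the decomposition of a solution to the recurrence \refeq{recurrence_basic} associated with $f(X) = P(X)^{k+1}$ in terms of the basis of quasi-polynomial solutions) forces every coefficient of each $w_j(X)$ to vanish. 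Therefore $\bw = 0$, i.e., $\bg = \bh$.

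The argument is mostly routine, and there is no serious obstacle. The only point requiring some care is that the indexing runs over all of $\Z$ rather than only $\N$: the decay $y_n \to 0$ uses negative indices essentially, and this is exactly where the hypothesis $|\alpha_j| > 1$ (coming from Assumption~2 together with the reduction $g_{p+1} = \cdots = g_d = 0$) is used. The conjugation constraints built into $\Xi_k$ need not be invoked at all, since they are preserved under subtraction and do not interfere with the linear independence argument over $\C$.
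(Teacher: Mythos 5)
Your proof is correct and follows essentially the same route as the paper: use $|\alpha_j|>1$ to get agreement (vanishing of the integer difference) on a tail as $n\to-\infty$, propagate through the recurrence via $A_D=1$, and conclude from uniqueness of the quasi-polynomial representation. The only cosmetic difference is that you work with the difference sequence and invoke classical linear independence of $(n^m\alpha_j^n)_{n\in\Z}$, where the paper compares the two sequences directly and cites Lemma~1(1) of \cite{AKK} for the final step.
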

\begin{proof}
It suffices to prove that $\iota$ is injective. 
Let $\bg^{(1)},\bg^{(2)}\in \Xi_k$ with $\iota(\bg^{(1)})=\iota(\bg^{(2)})$. 
Then, for every sufficiently large integer $m$, we have 
$|x_{-m}(\bg^{(j)})|<1/2$ for $j=1,2$, and so 
\[
x_{-m}(\bg^{(1)})=e\big(x_{-m}(\bg^{(1)})\big)
=
e\big(x_{-m}(\bg^{(2)})\big)=x_{-m}(\bg^{(2)}).
\]
Since 
\[
\sum_{\ell=0}^D A_{\ell} x_{n+\ell}(\bg^{( 1 )})
=
\sum_{\ell=0}^D A_{\ell} x_{n+\ell}(\bg^{( 2 )})
\]
for every integer $n$, we get by induction that, for every $n\in \Z$,  
\[
x_{n}(\bg^{(1)})=x_{n}(\bg^{(2)})
\]
and $\bg^{(1)}=\bg^{(2)}$ by Lemma 1(1) in \cite{AKK}. 
\end{proof}
Put $B:=\lfloor \sum_{\ell=0}^{D}2^{-1}\cdot |A_{\ell}|\rfloor$ and 
\[
\A:=[-B,B]\cap \Z.
\]
Moreover, let 
\begin{align}
\W_R&:=\left\{\left.\bs=(s_n)_{n\in\Z}\in \A^{\Z}
\ \right| \
s_{n}=0
\mbox{ for every }n<-R\right\}, 
\nonumber\\
\W&:=\bigcup_{R=0}^{\infty} \W_R.\label{eqn:fullshift}
\end{align}
Set 
\[
\rho_n^{f}:=\frac{-1}{2\pi\sqrt{-1}}\int_{|z|=1} \frac{z^{n-1}}{f(z)} dz
\]
for every integer $n$, 
where the contour integration is calculated counterclockwise. 
The next proposition follows from Proposition~1 in \cite{AKK}: 
\begin{prop}\label{prop:2-2}
\begin{enumerate}
    \item For every $n\geq 1$, we have 
\[
\rho_n^{f}=-\sum_{j=p+1}^{d} 
\mathrm{Res}\left(
\frac{z^{n-1}}{f(z)}, \alpha_j
\right),
\]
where $\mathrm{Res}(r(z),\lambda)$ is the residue of $r(z)$ at $z=\lambda$ 
and $\rho_n^{f}=0$ when $p=d$. \\
    \item For every $n\leq D-1$, we have 
\[
\rho_n^{f}=\sum_{j=1}^{p} 
\mathrm{Res}\left(
\frac{z^{n-1}}{f(z)}, \alpha_j
\right).
\]
\end{enumerate}
\end{prop}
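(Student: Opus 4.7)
The plan is to obtain Proposition~\ref{prop:2-2} as a direct application of the residue theorem to the rational function $r(z):=z^{n-1}/f(z)$ on the Riemann sphere, treating the two cases separately according to which side of the unit circle the poles lie on. Since the $\alpha_j$ are the distinct roots of $P$ and $f=P^{k+1}$, the finite poles of $r$ are $\alpha_1,\ldots,\alpha_d$ (and possibly $z=0$). By Assumption~2, none of the $\alpha_j$ lies on $|z|=1$, and by the indexing convention $|\alpha_j|>1$ for $j\le p$ while $|\alpha_j|<1$ for $j>p$.

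For part (1), I would use the residue theorem in its standard form, summing residues \emph{inside} the unit circle. The key observation is that when $n\ge 1$, the factor $z^{n-1}$ is holomorphic at the origin and $f(0)=a_0^{k+1}\ne 0$, so $r$ has no pole at $z=0$. The only poles inside $|z|=1$ are therefore $\alpha_{p+1},\ldots,\alpha_d$, and the counterclockwise contour integral equals $2\pi\sqrt{-1}$ times the sum of the residues at these points. Dividing by $-2\pi\sqrt{-1}$ according to the definition of $\rho_n^f$ gives the claimed formula, and the degenerate case $p=d$ yields $\rho_n^f=0$ because the sum is empty.

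For part (2), I would instead express the integral using residues \emph{outside} the unit circle, which are at $\alpha_1,\ldots,\alpha_p$ together with the point at infinity. The crux is to verify that $\operatorname{Res}(r,\infty)=0$ when $n\le D-1$. Since $f$ is monic of degree $D$, one has the Laurent expansion at infinity
\begin{equation*}
r(z)=\frac{z^{n-1}}{f(z)}=\sum_{j\ge 0} c_j\, z^{n-1-D-j}
\end{equation*}
for certain coefficients $c_j$ with $c_0=1$. The coefficient of $z^{-1}$ corresponds to the index $j=n-D$, which is negative when $n\le D-1$, so no such term appears and the residue at infinity vanishes. Using that the sum of all residues on the Riemann sphere is zero, the integral over $|z|=1$ equals $-2\pi\sqrt{-1}\sum_{j=1}^{p}\operatorname{Res}(r,\alpha_j)$, and dividing by $-2\pi\sqrt{-1}$ gives the desired identity.

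The main obstacle is just the bookkeeping in part (2): one must be sure that the $n\le D-1$ hypothesis is exactly what forces the expansion at infinity to begin at order $z^{-2}$ or lower, and must keep track of the two sign flips (orientation of the contour, and the convention that $\operatorname{Res}(r,\infty)$ is minus the $z^{-1}$ coefficient of the expansion at infinity) so that the final formula matches the stated one. Everything else is a routine application of the residue theorem, which explains why the proposition is attributed to Proposition~1 of~\cite{AKK}.
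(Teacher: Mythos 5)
Your proof is correct: part (1) is the residue theorem applied inside the unit circle (noting $f(0)=a_0^{k+1}\ne 0$ and $n\ge 1$ rule out a pole at the origin), and part (2) correctly passes to the exterior poles $\alpha_1,\ldots,\alpha_p$ plus the point at infinity, where the hypothesis $n\le D-1$ forces the Laurent expansion of $z^{n-1}/f(z)$ at infinity to start at order $z^{-2}$, so $\operatorname{Res}(r,\infty)=0$; this also silently and correctly disposes of the possible pole at $z=0$ when $n\le 0$, since it lies inside the contour and never enters the exterior computation. The paper itself gives no argument, simply deducing the statement from Proposition~1 of \cite{AKK}; your computation is the routine residue-theorem verification that this citation encapsulates, so there is nothing substantive to compare beyond noting that your write-up is self-contained where the paper defers to the reference.
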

Note that for every $j=1,\ldots,d$, we have that
\begin{align*}
\mathrm{Res}\left(
\frac{z^{n-1}}{f(z)}, \alpha_j
\right)
=
\left. \frac{1}{k!}\frac{d^k}{dz^k}(z^{n-1} G(z))\right|_{z=\alpha_j},
\end{align*}
where 
\[
G(z)=\frac{(z-\alpha_j)^{k+1}}{f(z)}
\]
with $G(\alpha_j)\ne 0,\infty$. For instance, if $k=0$, then 
\[
\mathrm{Res}\left(
\frac{z^{n-1}}{f(z)}, \alpha_j
\right)
=
\frac{\alpha_j^{n-1}}{f'(\alpha_j)}.
\]
For general $k\geq 0$, there exists $0<\delta<1$ such that 
\begin{align}\label{eqn:2-2}
\rho_n^{f}=o(\delta^{|n|})
\end{align}
as $|n|\to\infty$ because $|\alpha_j|>1$ for $j=1,\ldots,p$ and $|\alpha_j|<1$ for $j=p+1,\ldots,d$.  \par
For every $\bs=(s_n)_{n\in \Z}\in \W$, put 
\begin{align}\label{eqn:2-3}
\phi_m(\bs):=\sum_{n\in\Z} \rho_{m-n}^{f} s_n
\end{align}
for every integer $m$. Note that the right-hand side of (\ref{eqn:2-3}) 
absolutely converges by (\ref{eqn:2-2}). 
Moreover, set 
\[
\Phi(\bs):=(\phi_m(\bs)\pmod{\Z})_{m\in \Z}\in (\R/\Z)^{\Z}.
\]
By Theorem 1(3) in \cite{AKK} we have the following: 
\begin{fac}\label{fac:2-1}
The sequence $\Phi(\bs)$ is a member of $\RR_k$ for every $\bs\in \W$.
\end{fac}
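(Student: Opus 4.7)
The plan is to lift the $(\R/\Z)^{\Z}$-valued sequence $\Phi(\bs)$ to a genuine real-valued recurrence sequence of the form $(x_m(\bg))_{m\in\Z}$ with $\bg\in \Xi_k$. The starting point is an identity for $\rho_n^f$: since $f(z)=\sum_{\ell=0}^{D}A_\ell z^\ell$, the contour formula gives
$$\sum_{\ell=0}^{D} A_\ell \rho_{r+\ell}^{f}=-\frac{1}{2\pi\sqrt{-1}}\int_{|z|=1}z^{r-1}\,dz=-\1_{\{r=0\}}.$$
Substituting this into (\ref{eqn:2-3}) and swapping the absolutely convergent sums (using (\ref{eqn:2-2})) yields
$$\sum_{\ell=0}^{D}A_\ell \phi_{m+\ell}(\bs)=\sum_{n\in\Z}s_n\sum_{\ell=0}^{D}A_\ell \rho_{m-n+\ell}^{f}=-s_m\in\Z,$$
so $(\phi_m(\bs))_{m\in\Z}$ satisfies (\ref{eqn:recurrence_basic}) modulo $\Z$.

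Next, $\bs\in \W_R$ and (\ref{eqn:2-2}) together imply $\phi_m(\bs)\to 0$ exponentially as $m\to -\infty$. Define $\widetilde{\phi}_m$ to be the unique solution of the homogeneous recurrence $\sum_{\ell=0}^{D}A_\ell \widetilde{\phi}_{m+\ell}=0$ determined by the $D$ initial values $\widetilde{\phi}_m=\phi_m(\bs)$ on the window $m\in\{-R-D,\ldots,-R-1\}$. Because the forcing $-s_m$ vanishes for indices $m\leq -R-1$, propagating the homogeneous equation both forwards and backwards from this window gives $\widetilde{\phi}_m=\phi_m(\bs)$ for every $m\leq D-R-1$; in particular $\widetilde{\phi}_m\to 0$ as $m\to -\infty$. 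For indices $m\geq D-R$, the difference $z_m:=\widetilde{\phi}_m-\phi_m(\bs)$ obeys $\sum_{\ell=0}^{D}A_\ell z_{m+\ell}=s_m\in\Z$ with integer initial values all equal to $0$; since $A_D=1$, the forward recursion keeps $z_m$ integer-valued, so $\widetilde{\phi}_m\equiv \phi_m(\bs)\pmod{\Z}$ for every $m\in\Z$.

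Being a solution of the homogeneous recurrence on all of $\Z$, one has $\widetilde{\phi}_m=\sum_{j=1}^{d}g_j(m)\alpha_j^m$ for some polynomials $g_j(X)\in \C[X]$ of degree at most $k$. The modes with $|\alpha_j|<1$ (i.e.\ $j>p$) blow up as $m\to -\infty$, so the decay $\widetilde{\phi}_m\to 0$ forces $g_{p+1}=\cdots=g_d\equiv 0$, giving $\widetilde{\phi}_m=x_m(\bg)$ for $\bg:=(g_1,\ldots,g_p)$. To verify $\bg\in \Xi_k$, I would observe that $f\in \Z[X]$ makes the residues in \refp{2-2} at complex-conjugate roots of $f$ themselves complex-conjugate, so $\rho_r^f\in \R$ and hence $\phi_m(\bs)$ and $\widetilde{\phi}_m$ are real-valued; the reality/conjugation constraints on the $g_j$ defining $\Xi_k$ then follow from Lemma~1(2) of \cite{AKK}. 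Consequently $\Phi(\bs)=\iota(\bg)\in \RR_k$.

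The main obstacle is the integrality step in paragraph two. It relies on two essential features: the one-sided vanishing of $s_m$, which lets the matching window be placed on the left where $\widetilde{\phi}$ agrees with $\phi$ exactly rather than merely mod $\Z$, and the normalization $A_D=1$, which makes forward propagation preserve $\Z$; backward propagation would require inversion of $A_0$ and could introduce denominators, so the direction in which one transports the integer discrepancy is forced.
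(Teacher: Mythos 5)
Your argument is correct, and it is genuinely different in character from what the paper does: the paper offers no proof of Fact~\ref{fac:2-1} at all, citing Theorem~1(3) of \cite{AKK}, and the surrounding text (Proposition~1(4) of \cite{AKK} giving $\bh\in\Xi_k$ with $\rho_n^f\equiv x_n(\bh)\pmod{\Z}$, and the explicit coefficients \refeq{2-5}) indicates that the cited proof is constructive: one substitutes $\rho_{m-n}^f\equiv x_{m-n}(\bh)$ into \refeq{2-3} and reads off the polynomials $g_j(\bs;X)$ directly, which is also what later lets the paper see that $\theta|_{\W_R}$ is continuous. Your route instead verifies the inhomogeneous recurrence $\sum_{\ell}A_\ell\phi_{m+\ell}(\bs)=-s_m$ via the clean contour identity $\sum_\ell A_\ell\rho_{r+\ell}^f=-\1_{\{r=0\}}$, solves the homogeneous initial value problem on a window far to the left where $\phi_m(\bs)$ decays, and transports the integer discrepancy forward using $A_D=1$; this buys a self-contained proof that does not invoke Proposition~1(4) of \cite{AKK}, at the cost of not producing the explicit formula \refeq{2-5}. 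All the key steps check out: the interchange of sums is covered by \refeq{2-2}, the one-sided support of $\bs$ correctly confines the forcing, the integrality propagation is set up in the only direction where it works, and the reality of $\rho_n^f$ (hence of $\widetilde{\phi}_m$) together with Lemma~1(2) of \cite{AKK} does give $\bg\in\Xi_k$.

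One step deserves a sentence more than you give it: from $\widetilde{\phi}_m\to 0$ as $m\to-\infty$ you conclude $g_{p+1}=\cdots=g_d\equiv 0$ by saying the small modes ``blow up'' backwards, but a priori distinct modes of equal modulus could cancel along the sequence. The standard fix is short: after subtracting the $j\le p$ part (which also tends to $0$ as $m\to-\infty$), the remaining exponential polynomial $y_m=\sum_{j>p}g_j(m)\alpha_j^m$ satisfies a linear recurrence whose companion matrix $M$ has all eigenvalues of modulus $<1$; reading it backwards, the state vectors satisfy $v_{m-1}=Mv_m$, so $v_{m}\to 0$ as $m\to-\infty$ forces $v_m=\lim_{r\to\infty}M^r v_{m-r}=0$ for every $m$, hence $y\equiv 0$ and all $g_j$ with $j>p$ vanish. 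With that sentence added, your proof is complete.
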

The sequence $\Phi(\bs)$ realizes the fractional parts of certain recurrence sequences. 
In fact, let $\theta(\bs):=\iota^{-1}\circ 
\Phi(\bs)\in \Xi_k$ for $\bs\in \W$. 
Then we see by $\Phi(\bs)=\iota\circ \theta(\bs)$ that for every integer $m$, 
\begin{align}\label{eqn:2-4}
\phi_m(\bs)\equiv x_m\big(\theta(\bs)\big) \pmod{\Z}.
\end{align}
%for any integer $m$. 
\begin{rem}\label{rem:aaaa}
    For every $\bs\in \W$, we see that $\theta(\bs)$ is the unique element in $\Xi_k$ satisfying (\ref{eqn:2-4}) for every integer $m$ since $\iota$ is injective. \par
\end{rem}
We recall the explicit representation for 
$\theta(\bs)$. 
Proposition 1(4) in \cite{AKK} implies that there exists 
$\bh=(h_1(X),\ldots,h_p(X))\in \Xi_k$ such that, for all $n\in \Z$,  
\[
\rho_n^f\equiv x_n(\bh) \pmod{\Z}.
\]
Set 
\[
h_j(X)=:\sum_{\ell=0}^{k} c_{j,\ell} X^{\ell}
\]
for every $j=1,\ldots, p$. Let $R\geq 0$ and 
$\bs=(s_n)_{n\in \Z}\in \W_R$. Put 
$\theta(\bs)=:(g_1(\bs;X),\ldots,g_p(\bs;X))$. By the proof of Theorem 1 (3) in \cite{AKK}, we have 
$g_j(\bs;X)=\sum_{i=0}^k \wi{{c}_{j,i}} X^i$, where 
\begin{align}\label{eqn:2-5}
\wi{{c}_{j,i}}=\sum_{\ell=i}^{k} \sum_{n=-R}^{\infty}
c_{j,\ell}{\ell\choose i}(-n)^{\ell-i}\alpha_j^{-n} s_n.
\end{align}
Now we introduce the discrete topology on $\A$. For every 
$R\geq 0$, we introduce the product topology for 
$\W_R\cong \A^{\Z\cap [-R,\infty)}$. Then the map 
\[
\theta |_{\W_R}:\W_R\mapsto \Xi_k
\]
is continuous because %the right-hand side of (\ref{eqn:2-5}) absolutely converges. 
\[
\sum_{\ell=i}^{k} \sum_{n=-R}^{\infty}
|c_{j,\ell}|{\ell\choose i}n^{\ell-i}\alpha_j^{-n} B<\infty
\]
by $|\alpha_j|>1$ for $j=1,\ldots,p$.
Moreover, Theorem 1(2) in \cite{AKK} implies the following: 
\begin{fac}\label{fac:2-2}
The map $\theta: \W\to\Xi_k$ is surjective. 
\end{fac}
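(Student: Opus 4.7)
The plan is to construct, for each $\bg \in \Xi_k$, an explicit pre-image $\Psi(\bg) \in \W$ under $\theta$ (the \emph{canonical digit sequence} referenced at (\ref{eqn:canonical})). By the injectivity of $\iota$ from Proposition~\ref{prop:2-1}, the goal $\theta(\Psi(\bg)) = \bg$ reduces to verifying the fractional-part identity
$$\phi_m(\Psi(\bg)) \equiv x_m(\bg) \pmod{\Z} \qquad \text{for every } m \in \Z.$$

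I would set
$$s_m(\bg) := \sum_{\ell=0}^D A_\ell\, u\bigl(x_{m+\ell}(\bg)\bigr), \qquad m \in \Z.$$
Since $\sum_\ell A_\ell x_{m+\ell}(\bg) = 0$ by~(\ref{eqn:2-1}), this rewrites as $s_m(\bg) = -\sum_\ell A_\ell e(x_{m+\ell}(\bg))$, which is an integer of absolute value at most $\tfrac{1}{2}\sum_\ell |A_\ell|$; hence $s_m(\bg) \in \A$. Because $x_n(\bg) = \sum_{j=1}^p g_j(n)\alpha_j^n$ with $|\alpha_j| > 1$, we have $x_n(\bg) \to 0$ as $n \to -\infty$, so $u(x_{m+\ell}(\bg)) = 0$ for all $0 \le \ell \le D$ once $m$ is sufficiently negative; consequently $\Psi(\bg) := (s_m(\bg))_{m\in\Z}$ lies in $\W_R$ for some $R$.

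To verify the pre-image property, write $\bs = \Psi(\bg)$ and $D_m := \phi_m(\bs) - e(x_m(\bg))$. Using $\sum_\ell A_\ell z^\ell = f(z)$, a Cauchy-integral calculation yields
$$\sum_{\ell=0}^D A_\ell \rho_{n+\ell}^f \;=\; \frac{-1}{2\pi\sqrt{-1}} \int_{|z|=1} z^{n-1}\, dz,$$
which equals $-1$ for $n = 0$ and $0$ for $n \ne 0$. Convolving with $(s_n)_n$ gives $\sum_\ell A_\ell \phi_{m+\ell}(\bs) = -s_m$, while the definition of $s_m$ together with (\ref{eqn:2-1}) produces $\sum_\ell A_\ell e(x_{m+\ell}(\bg)) = -s_m$ as well. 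Hence $D_m$ satisfies the homogeneous recurrence $\sum_\ell A_\ell D_{m+\ell} = 0$ with characteristic polynomial $f(X) = P(X)^{k+1}$, and therefore admits a closed form $D_m = \sum_{j=1}^d G_j(m)\alpha_j^m$ with $G_j \in \C[X]$ and $\deg G_j \le k$.

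The final step, which I expect to be the main obstacle, is to force all $G_j \equiv 0$ by two-sided growth control. As $m \to -\infty$, both $x_m(\bg) \to 0$ and (by the decay estimate~(\ref{eqn:2-2}) combined with the left support of $\bs$) $\phi_m(\bs) \to 0$; so $D_m \to 0$, which eliminates the terms with $|\alpha_j| < 1$ (whose magnitudes blow up and cannot be canceled by polynomial factors), forcing $G_j \equiv 0$ for $j = p+1,\ldots,d$. As $m \to +\infty$, $|e(x_m(\bg))| \le 1/2$ trivially, while $|\phi_m(\bs)| \le B\sum_{n \in \Z}|\rho_n^f| < \infty$ uniformly in $m$ again by~(\ref{eqn:2-2}); so $D_m$ is bounded, which in turn kills the terms with $|\alpha_j| > 1$. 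Therefore $D_m \equiv 0$, yielding $\phi_m(\Psi(\bg)) = e(x_m(\bg)) \equiv x_m(\bg) \pmod{\Z}$ and completing the proof of surjectivity.
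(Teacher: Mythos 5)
Your proposal is correct, and it uses the very witness the paper itself exhibits: the canonical digit sequence $\Psi(\bg)$ with $s_m(\bg)=\sum_{\ell=0}^{D}A_\ell u\bigl(x_{m+\ell}(\bg)\bigr)$, together with the reduction of $\theta(\Psi(\bg))=\bg$ to the congruence $\phi_m(\Psi(\bg))\equiv x_m(\bg)\pmod{\Z}$ via injectivity of $\iota$ (Proposition~\ref{prop:2-1} and Remark~\ref{rem:aaaa}). The genuine difference is in how that congruence is obtained: the paper does not prove it, citing Theorem~1(2) of \cite{AKK} both for (\ref{eqn:canonical}) and for Fact~\ref{fac:2-2}, whereas you derive it directly from the Cauchy-integral identity $\sum_{\ell=0}^{D}A_\ell\rho^f_{n+\ell}=-\delta_{n,0}$, the observation that $(\phi_m(\bs))_m$ and $(e(x_m(\bg)))_m$ then satisfy the same inhomogeneous recurrence with right-hand side $-s_m$, and a two-sided growth argument killing their difference. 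All of those computations check out (the interchange of summations is legitimate by the absolute convergence coming from (\ref{eqn:2-2}), the uniform bound on $\phi_m$ and the decay of both terms as $m\to-\infty$ are as you state), so your route buys a self-contained proof where the paper leans on an external reference. The one step you leave informal is the assertion that a not-identically-zero exponential polynomial $\sum_j G_j(m)\alpha_j^m$ with distinct bases of modulus $<1$ (resp.\ $>1$) cannot stay bounded as $m\to-\infty$ (resp.\ $m\to+\infty$): possible cancellation among distinct bases of equal modulus (e.g.\ conjugate pairs) must be ruled out, and ``cannot be canceled by polynomial factors'' is not yet an argument. This is standard and easy to close; for instance, note that your difference sequence is bounded on all of $\Z$, that the vector of $D$ consecutive values is propagated by the companion matrix of $f$, and that since no root of $f$ lies on the unit circle the stable/unstable splitting (with suitable norms making the map a contraction forward on the stable part and backward on the unstable part) forces a bounded two-sided orbit to be zero. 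With that lemma stated and proved, your argument is complete.
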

We give an explicit element $\Psi(\bg)$ in 
$(\emptyset\ne )\theta^{-1}(\bg)\subseteq \W$ for $\bg\in \Xi_k$. 
For every integer $m$, let 
\[
s_m(\bg):=\sum_{\ell=0}^D A_{\ell} u\big(x_{m+\ell}(\bg)\big).
\]
Then we see by (\ref{eqn:2-1}) that 
\begin{align*}%\label{eqn:2-6}
s_m(\bg)%:=\sum_{\ell=0}^D A_{\ell} u\big(x_{m+\ell}(\bg)\big)
=-\sum_{\ell=0}^D A_{\ell} e\big(x_{m+\ell}(\bg)\big)\in \A%\}
\end{align*}
for every integer $m$ because $e(x_{i}(\bg))\in [-1/2,1/2)$ for all $i\in \Z$. Set  
\begin{align*}%\label{eqn:2-7}
\Psi(\bg):=(s_m(\bg))_{m\in \Z}\in \W
\end{align*}
because, for every sufficiently large $m$, we have $|x_{-m}(\bg)|<1/2$ and 
$s_{-m}(\bg)=0$. By Theorem 1(2) in \cite{AKK} we see 
for every integer $m$ that 
\begin{align}\label{eqn:canonical}
\phi_m\big(\Psi(\bg)\big)
=\sum_{n\in \Z} \rho_{m-n}^f s_n(\bg)
=e\big(x_m(\bg)\big)
\equiv x_m(\bg) \pmod{\Z}.
\end{align}
Applying Remark \ref{rem:aaaa} with $\bs=\Psi(\bg)$, we see that 
\[
\theta\circ\Psi(\bg)=\bg.
\]
For every $\bg=(g_1(X),\ldots,g_p(X))\in \Xi_k$, we set 
\begin{align}\label{eqn:2-11}
\tau(\bg):=(\alpha_1 g_1(X+1),\ldots, \alpha_p g_p(X+1)).
\end{align}
It is obvious that $\tau(\bg)\in \Xi_k$. 
Note that $\tau:\Xi_k\to\Xi_k$ is bijective because the inverse map is given by 
\[
\tau^{-1}(\bg)=(\alpha_1^{-1}g_1(X-1),\ldots,\alpha_p^{-1}g_p(X-1)). 
\]
Let $\sigma$ and $\wi{\sigma}$ be the shift operators on $\W$ and $\RR_k$, respectively. We have  %For any $\bs=(s_n)_{n\in \Z}\in \W$, put 
\begin{align}
    \sigma(\bs)&=(s_{n+1})_{n\in\Z}\in \W \mbox{ for } \bs=(s_n)_{n\in \Z}, \label{eqn:2-8}\\
    \wi{\sigma}\left((y_n \pmod{\Z})_{n\in\Z} \right)&=
    (y_{n+1} \pmod{\Z})_{n\in\Z} \mbox{ for } (y_n \pmod{\Z})_{n\in\Z}\in \RR_k. \nonumber
\end{align}
We now check that $\wi{\sigma}$ is well-defined, that is, for every $\bp \in\RR_k$, we have $\wi{\sigma}(\bp)\in \RR_k$. 
%\begin{align}
%\sigma(\bs):=(s_{n+1})_{n\in\Z}\in \W.
%\end{align}
There exists $\bg=(g_1(X),\ldots,g_p(X))\in \Xi_k$ such that $\bp=(x_n(\bg)\pmod{\Z})_{n\in \Z}$. 
We see for every integer $m$ that 
\begin{align}\label{eqn:2-12}
x_{m+1}(\bg)=\sum_{j=1}^p(\alpha_j g_j(m+1))\alpha_j^m
=
x_m\big(\tau(\bg)\big).
\end{align}
In particular, we have $\wi{\sigma}(\bp)=(x_m(\tau(\bg))\pmod{\Z})_{m\in \Z}\in \RR_k$. Moreover, we get  
\begin{align}\label{eqn:2-13}
\wi{\sigma}\circ \iota=\iota \circ \tau.
\end{align}
We now show that
\begin{align}\label{eqn:2-10}
\Phi\circ \sigma=\wi{\sigma}\circ \Phi,
\end{align}
where $\mathrm{Im} \Phi \subseteq\RR_k$ by Fact \ref{fac:2-1}. 
In fact, let $\bs=(s_n)_{n\in \Z}\in \W$. It follows by (\ref{eqn:2-3}) that 
\begin{align}
\phi_m\big(\sigma(s)\big)
=
\sum_{n\in\Z} \rho_{m-n}^{f} s_{n+1}
=
\sum_{n\in\Z} \rho_{m+1-n}^{f} s_n
=
\phi_{m+1}(\bs)
\label{eqn:2-9}
\end{align}
for every integer $m$. 
Combining (\ref{eqn:2-13}) and (\ref{eqn:2-10}), we obtain 
by $\theta=\iota^{-1}\circ \Phi$ that $\iota^{-1}\circ \wi{\sigma}=\tau\circ\iota^{-1}$ and 
\begin{align}\label{eqn:2-14}
\theta \circ \sigma=\iota^{-1}\circ \Phi \circ \sigma=\iota^{-1}\circ \wi{\sigma}\circ \Phi
=\tau\circ\iota^{-1}\circ \Phi
=\tau\circ\theta.
\end{align}
\begin{figure}[ht]
\centering
\begin{tikzcd}
\W \arrow[r, "\sigma"] \arrow[d, "\theta"'] & \W \arrow[d, "\theta"] \\
\Xi_k \arrow[r, "\tau"'] & \Xi_k
\end{tikzcd}
\caption{$\sigma,\tau$ commute with $\theta$.}
\label{fig:commute}
\end{figure}
\begin{rem}
    The set $\W$ of digit sequences is a countable union of one-sided full shift spaces densely embedded in the two-sided full shift. $\W_0$ is naturally identified with a one-sided full shift space $\W':= \{(s_n)_{n\geq 0}\mid s_n\in \A \mbox{ for all }n\ge 0\}$. 
    However, when we consider special recurrence sequences such as geometric progressions, the digits are restricted. For example, let \[f(X)=X^2-20X+82=(x-10-3\sqrt{2})(x-10+3\sqrt2).\]
    We consider the case where $x_n= \xi_1 (10+3\sqrt{2})^n$ ($\xi_2=0$). In the same way as in the proof of  Proposition 5.1 in \cite{Ka:09} we can prove that $\sum_{n\in \Z}s_{n}((\xi_1,0))(10-3\sqrt{2})^{-n}=0$. Conversely, if $\bs=(s_n)_{n\in \Z}\in \W$ satisfies $\sum_{n\in\Z} s_n (10-3\sqrt{2})^{-n}=0,$ then there exists a real number $\xi_1$ such that $\phi_m(\bs)\equiv \xi_1 (10+3\sqrt{2})^m \pmod{\Z}$ for every $m\in \Z$ (see Proposition~5.2 in \cite{Ka:09}). Thus, it is interesting to investigate whether the set of such sequence of digits defined by \[
    \left\{(s_n)_{n\geq 0}\in \W'\ \left| \ s_n\in \A \mbox{ for every }n\ge 0, \sum_{n\geq 0}s_{n}(10-3\sqrt{2})^{-n}=0\right. \right\}
    \] satisfies the right feeble specification property.    The right feeble specification property is described in Definition~5 from~\cite{DSDB20}, where it is only given for subshifts. We note that this definition can easily be extended to subsets of the full shift to allow us to define it for $\W$. The right feeble specification property was key for proving the main theorem of \cite{DSDB20} and could likely be used to greatly extend the results of the current paper by combining elements from that proof.
    
\end{rem}
\section{Relation between uniform distribution modulo 1 and finite words}\label{section4}
Let 
\begin{align*}
\Gamma&:=\left\{\left.
\frac{a}{2^{\ell}} \ \right| \ \ell, a\in \Z, \ell\geq 1, -2^{\ell-1}\leq a\leq 2^{\ell-1}
\right\},
\\
\I&:=\left\{\left.
\left[\frac{a}{2^{\ell}}, \frac{b}{2^{\ell}}\right) 
\ \right| \
\ell,a,b\in \Z, \ell\geq 1, -2^{\ell-1}\leq a<b\leq 2^{\ell-1}
\right\}. 
\end{align*}
For every $I=[x,y)\in \I$, we denote its length by $|I|:=y-x$. Moreover, let 
\[
\mathcal{D}(I):=\min\{\ell\in \Z\mid \ell\geq 1, 2^{\ell}x,2^{\ell}y\in \Z\}.
\]
We list the elements of $\I$ by 
\[
\I=\{I_1,I_2,I_3,\ldots\}
\]
so that, for every $j,j'\geq 1$, if $\mathcal{D}(I_j)<\mathcal{D}(I_{j'})$, then $j<j'$. 
In particular, we have $\mathcal{D}(I_j)\leq j$ for every $j\geq 1$. \par
We now introduce the notation on finite words. 
Set 
\begin{align*}
\A^{+}:=\bigcup_{n\geq 1} \A^{\{0,1,\ldots,n-1\}}, \quad
\A^{*}:=\A^{+}\cup \{\emptyset\}, 
\end{align*}
where $\emptyset$ is the empty word. 
For every $v=v_0v_1\ldots v_{m-1}$, we denote its length by 
$|v|:=m$. For convenience, put $|\emptyset|:=0$. 
We define an equivalence relation $\sim$ on $\A^+$ as follows: 
For any $v,w\in \A^+$, we define $v\sim w$ if and only if 
there exists $\ell\geq 0$ such that $v=w0^{\ell}$ or $w=v0^{\ell}$. 
We denote the element of $\F:=\A^+/\sim$ including $v\in \A$ by 
$\ov{v}$. We define the map $\gamma:\F\to \W$ as follows: 
For any $\ov{v}=\ov{v_0v_1\ldots v_m}\in \F$, let 
$\gamma(\ov{v}):=(v_n)_{n\in\Z}$, where 
$v_n=0$ when $n\not\in \{0,1,\ldots,m-1\}$. 
Then $\gamma$ is well-defined and injective. 
For any $v\in \A^+$, we define 
\begin{align}\label{eqn:3-1}
\theta(v):=\theta\big(\gamma(\ov{v})\big).
\end{align}
For any $\bt=(t_n)_{n\in \Z}\in \W$ and $\ell\geq 1$, we put 
$\bt|_{\ell}:=t_0t_1\ldots t_{\ell-1}$. Moreover, for any integers $M,N$ with $M\leq N$, let 
$\bt|_{[M,N]}:=t_Mt_{M+1}\ldots t_N$. 
Similarly, for every 
$v=v_0v_1\ldots v_{m-1}$ and every integer $\ell$ with $1\leq \ell \leq m$, 
set $v|_{\ell}:=v_0v_1\ldots v_{\ell-1}$. \par
Let $\bg\in \Xi_k$ %with $k\geq 0$ 
and $I\in \I$. 
For any positive integers $M, N$ with $M\leq N$, set 
\begin{align*}
\lambda(\bg,I;N)
&:=
\mbox{Card}\left\{
n\in \Z
\ \left| \ 
0\leq n\leq N-1, e\big(x_n(\bg)\big)\in I\right.\right\}
,\\
\lambda(\bg,I;M,N)
&:=
\mbox{Card}\left\{
n\in \Z
\ \left| \ 
M\leq n\leq N-1, e\big(x_n(\bg)\big)\in I\right.\right\}
,
\end{align*}
where $\lambda(\bg,I;0)=\lambda(\bg,I;N,N)=0$. 
Let $I=[a,b)\in \I$. 
We choose an interval $I_1=[a_1,b_1)\in\I$ with $I_1\subseteq I$. 
We take an interval $I_2$ of the torus $\R/\Z$ with $I \subseteq I_2$ as follows: 
If $-1/2<a<b<1/2$,  we take $I_2=[a_2,b_2)\in \I$ with 
$a_2<a<a_1<b_1<b<b_2$. 
If $a=-1/2, b<1/2$, then we use 
$I_2=[-1/2,b_2)\cup [-2^{-\ell}+1/2,1/2)$ with $b_2\in \Gamma, b<b_2<-2^{\ell}+1/2$ and 
$|I_2|:=2^{-\ell}+b_2+1/2$. 
If $a>-1/2,b=1/2$, then we use 
$I_2=[-1/2,-1/2+2^{-\ell})\cup [a_2,1/2)$ with $a_2\in \Gamma, -1/2+2^{-\ell}<a_2<a$ and 
$|I_2|:=2^{-\ell}+1/2-a_2$. 
If $a=-1/2,b=1/2$, then put $I_2:=[-1/2,1/2)$. 
Note that $I_1\in \I$, and $I_2\in \I$ or $I_2$ is a disjoint union of two elements of $\I$. 
\begin{lem}\label{lem:3-1}
There exists a positive integer $L=L(I,I_1,I_2)$, depending only on 
$I,I_1$ and $I_2$ satisfying the following: 
Let $N$ be positive integer and 
$\bs=(s_n)_{n\in \Z}, \bt=(t_n)_{n\in \Z}\in \W$ with 
$\bs|_N=\bt|_N$. Then we have 
\begin{align*}
\lambda(\theta(\bs),I;N)
&>
-L+\lambda(\theta(\bt),I_1;N)
, \\
\lambda(\theta(\bs),I;N)
&<
L+\lambda(\theta(\bt),I_2;N).
\end{align*}
\end{lem}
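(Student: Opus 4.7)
The plan is to exploit the fact that when $\bs,\bt \in \W$ agree on the indices $\{0,1,\ldots,N-1\}$, the real numbers $\phi_m(\bs)$ and $\phi_m(\bt)$ agree up to an error that decays geometrically as $m$ moves away from both endpoints $0$ and $N-1$. Since $e(x_m(\theta(\bs))) = e(\phi_m(\bs))$ by \refeq{2-4} and similarly for $\bt$, this smallness transfers the membership question for $I,I_1,I_2$ between the two sequences; the strict gaps between $I_1$ and $I$, and between $I$ and $I_2$, provide the required slack.

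First, from \refeq{2-3} the common terms cancel and
\[
\phi_m(\bs)-\phi_m(\bt) = \sum_{n<0}\rho_{m-n}^{f}(s_n-t_n) + \sum_{n\geq N}\rho_{m-n}^{f}(s_n-t_n).
\]
Using $|s_n-t_n|\leq 2B$ together with the decay \refeq{2-2}, which yields $|\rho_j^f|\leq C_1\delta^{|j|}$ for some $0<\delta<1$, each tail is majorised by a geometric series. For $0\leq m\leq N-1$ this produces
\[
|\phi_m(\bs)-\phi_m(\bt)| \leq C_0\bigl(\delta^m+\delta^{N-m}\bigr),
\]
with $C_0$ depending only on $P,k,B,\delta$, and in particular independent of $N$, $\bs$, and $\bt$.

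Next, I would choose $\epsilon=\epsilon(I,I_1,I_2)>0$ small enough that any shift of magnitude less than $\epsilon$ on the torus $\R/\Z$ preserves the implications $e(\phi_m(\bt))\in I_1 \Rightarrow e(\phi_m(\bs))\in I$ and $e(\phi_m(\bs))\in I \Rightarrow e(\phi_m(\bt))\in I_2$. In the generic case $-1/2<a<b<1/2$, the strict inequalities $a_2<a<a_1<b_1<b<b_2$ provide positive two-sided margins, so any $\epsilon<\min(a_1-a,a-a_2,b-b_1,b_2-b)$ works, and the map $e$ is then locally the identity in a neighbourhood of $e(\phi_m(\bt))$ and $e(\phi_m(\bs))$. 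In the boundary cases $a=-1/2$ or $b=1/2$, the two-arc definition of $I_2$ is designed precisely to absorb the wrap-around at $\pm 1/2$; the two implications are verified by a case analysis on the position of $e(\phi_m(\bt))$ relative to the discontinuity of $e$ at $\pm 1/2$.

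Finally, I pick $m_0$ so that $C_0(\delta^m+\delta^{N-m})<\epsilon$ whenever $m,N-m\geq m_0$; such an $m_0$ depends only on $\epsilon,C_0,\delta$. On the interior range $m_0\leq m\leq N-m_0$ the above implications hold termwise, yielding
\[
\lambda(\theta(\bt),I_1;m_0,N-m_0) \leq \lambda(\theta(\bs),I;m_0,N-m_0) \leq \lambda(\theta(\bt),I_2;m_0,N-m_0).
\]
The two boundary ranges $[0,m_0)$ and $[N-m_0,N)$ each contribute at most $m_0$ indices to any count, and when $N\leq 2m_0$ the trivial bound $\lambda\leq N\leq 2m_0$ handles the lemma directly. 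Setting $L:=2m_0+1$ therefore gives both strict inequalities. The main obstacle is the torus case analysis when $I$ abuts $\pm 1/2$: the map $e$ is then discontinuous at the boundary of $I$, and one must verify that the prescribed union-of-arcs form of $I_2$ correctly bridges that discontinuity within an $\epsilon$ tolerance, independently of where the perturbation $\phi_m(\bs)-\phi_m(\bt)$ lies.
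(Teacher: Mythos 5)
Your proposal is correct and follows essentially the same argument as the paper: both show that for all indices $m$ except $O_{I,I_1,I_2}(1)$ many near $0$ and $N$, the values $\phi_m(\bs)$ and $\phi_m(\bt)$ differ by less than an $\epsilon$ chosen so that membership transfers across $I_1\subseteq I\subseteq I_2$, and then absorb the boundary indices into $L$. The only cosmetic difference is that the paper expresses the closeness via the shift identity $x_n(\theta(\bs))=x_0(\theta\circ\sigma^n(\bs))$ and the tail bound $\sum_{|m|\ge L'}B|\rho_m^f|<\epsilon/2$, whereas you estimate $\phi_m(\bs)-\phi_m(\bt)$ directly from \refeq{2-3} and \refeq{2-2}, which is the same computation since $\phi_0(\sigma^n\bs)=\phi_n(\bs)$.
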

\begin{proof}
For the proof of Lemma \ref{lem:3-1}, we denote by $L'$ a positive 
integer depending only on $I,I_1$, and $I_2$, defined later. 
For all integers $i\leq j$, we denote 
\[
\bs_{[i,j]}:=s_is_{i+1}\ldots s_j, \quad 
\bt_{[i,j]}:=t_it_{i+1}\ldots t_j.
\]
In what follows, we may assume that $N\geq 5L'$. 
Recall that 
\[
\lambda(\theta(\bs),I;N)
=
\mbox{Card}\left\{0\leq n\leq N-1
\mid 
e\big(x_n(\theta(\bs))\big)\in I\right\}. 
\]
We take a sufficiently small $\epsilon=\epsilon(I,I_1,I_2)$, depending only on $I,I_1,I_2$, so that the following holds: 
Let $y_1,y_2$ be real numbers with $|y_1-y_2|<\epsilon$. 
If $e(y_1)\in I_1$ (resp. $e(y_1)\in I$), then $e(y_2)\in I$ (resp. $e(y_2)\in I_2$). 
By (\ref{eqn:2-2}) if $L'=L'(I,I_1,I_2)$ is sufficienlty large depending only on $I,I_1,I_2$, then %, there exists a positive integer $L''=L''(\epsilon)$, depending only on $\epsilon$, such that 
\[
\sum_{m\in \Z\atop |m|\geq L'} B|\rho_m|<\frac{\epsilon}{2}. 
\]
Recall by (\ref{eqn:2-4}) and (\ref{eqn:2-3}) that 
\[
x_0(\theta(s))\equiv \phi_0(\bs)
=
\sum_{m\in \Z}\rho_{-m}^f s_m \pmod{\Z}.
\]
Using (\ref{eqn:2-12}) and (\ref{eqn:2-14}), 
we get for every nonnegative integer $n$ that 
\begin{align}\label{eqn:shift_on_x}
x_n(\theta(\bs))=x_0(\tau^n\circ\theta(s))
=x_0(\theta\circ \sigma^n(\bs)),
\end{align}
where $\tau^n, \sigma^n$ are the $n$-th iterations of $\tau, \sigma$, 
respectively. Let $L'\leq n\leq N-L'-1$, where $N-L'-1>L'$. Then we have 
\[
\sigma^n(\bs)|_{[-n,N-n-1]}=\sigma^n(\bt)|_{[-n,N-n-1]}.
\]
In particular, 
\begin{align}\label{eqn:3-2}
\sigma^n(\bs)|_{[-L',L']}=\sigma^n(\bt)|_{[-L',L']}.
\end{align}
Moreover, we see by (\ref{eqn:3-2}) that, for every $L'\leq n\leq N-L'-1$, 
\[
|x_0(\theta\circ \sigma^n(\bs)) - x_0(\theta\circ \sigma^n(\bt))|<\epsilon.
\]
%we get by (\ref{eqn:3-2}) that there exists a positive integer $L'=L'(I,I_1,I_2)$, depending only on $I,I_1$ and $I_2$ satisfying 
Thus, we get the following: 
\begin{enumerate}
\item If $L'\leq n\leq N-L'-1$ and 
$e(x_0(\theta\circ \sigma^n(\bs)))\in I$, 
then 
$e(x_0(\theta\circ \sigma^n(\bt)))\in I_2$.
\item If $L'\leq n\leq N-L'-1$ and 
$e(x_0(\theta\circ \sigma^n(\bt)))\in I_1$, 
then 
$e(x_0(\theta\circ \sigma^n(\bs)))\in I$.
\end{enumerate}
Hence, we obtain 
\begin{align*}
\lambda(\theta(\bs),I;N)
&=
\mbox{Card}\{0\leq n\leq N-1
\mid 
e(x_0(\theta\circ \sigma^n(\bs)))\in I\}
\\
&\geq
\mbox{Card}\{L'\leq n\leq N-L'-1
\mid 
e(x_0(\theta\circ \sigma^n(\bs)))\in I\}
\\
&\geq
\mbox{Card}\{L'\leq n\leq N-L'-1
\mid 
e(x_0(\theta\circ \sigma^n(\bt)))\in I_1\}
\\
&\geq
-2L'+
\mbox{Card}\{0\leq n\leq N-1
\mid 
e(x_0(\theta\circ \sigma^n(\bt)))\in I_1\}
\\
&=
-2L'+\lambda(\theta(\bt),I_1;N).
\end{align*}
Similarly, 
\begin{align*}
\lambda(\theta(\bs),I;N)
&=
\mbox{Card}\{0\leq n\leq N-1
\mid 
e(x_0(\theta\circ \sigma^n(\bs)))\in I\}
\\
&\leq
2L'+
\mbox{Card}\{L'\leq n\leq N-L'-1
\mid 
e(x_0(\theta\circ \sigma^n(\bs)))\in I\}
\\
&\leq
2L'+
\mbox{Card}\{L'\leq n\leq N-L'-1
\mid 
e(x_0(\theta\circ \sigma^n(\bt)))\in I_2\}
\\
&\leq
2L'+
\mbox{Card}\{0\leq n\leq N-1
\mid 
e(x_0(\theta\circ \sigma^n(\bt)))\in I_2\}
\\
&=
2L'+\lambda(\theta(\bt),I_2;N).
\end{align*}
\end{proof}
\begin{rem}\label{rem:3-1}
Using (\ref{eqn:shift_on_x}), we obtain the following: 
For every pair of positive integers $M, N$ with $M\leq N$, we have 
\begin{align}
&\lambda(\theta(\bs),I;M,N)\nonumber\\
&=\mbox{Card}\{M\leq n\leq N-1\mid 
e(x_0(\theta\circ \sigma^n(\bs)))\in I\}
\nonumber\\
&=
\mbox{Card}\{0\leq m\leq N-M-1\mid 
e(x_0(\theta\circ \sigma^m(\sigma^M(\bs))))\in I\}
\nonumber\\
&=
\lambda(\theta(\sigma^M(\bs)),I;N-M).\label{eqn:3-3}
\end{align}
\end{rem}
We note for every $\bg\in \Xi_k$ that 
$\bx^+(\bg)=(x_n(\bg))_{n\geq 0}$ is uniformly distributed modulo one 
if and only if for every $I\in \I$, we have 
\[
\lim_{N\to\infty}\frac{1}{N}\lambda(\bg,I;N)=|I|.
\]
\begin{defi}\label{defi:3-1}
Let $\bt\in \W$. Then $\bt$ is {\it generic} if $\bx^+(\theta(\bt))$ is 
uniformly distributed modulo one. 
\end{defi}
\begin{defi}\label{defi:3-2}

\begin{enumerate}
\item Let $w\in \A^+$. 
Let $m$ be a positive integer and $\epsilon$ a positive real number. 
Then $w$ is $(m,\epsilon)$-good if, for every $j=1,\ldots,m$, we have 
\[
|I_j|-\epsilon<\frac{1}{|w|}\lambda(\theta(w),I_j;|w|)<|I_j|+\epsilon,
\]
where $\theta(w)$ is defined by (\ref{eqn:3-1}). \\
\item Let $(u_n)_{n\geq 1}$ be a sequence with  
$u_n\in A^+$ for every $n\geq 1$. 
Then $(u_n)_{n\geq 1}$ is generic if for every $I\in \I$, we have 
\[
\lim_{n\to\infty}\frac{1}{|u_n|}
\lambda(\theta(u_n),I;|u_n|)=|I|
\]
\end{enumerate}
\end{defi}
\begin{rem}\label{rem:bbbb}
The sequence $(u_n)_{n\geq 1}$ is generic if and only if for every $m\geq 1$ 
and $\epsilon>0$, 
there exists a positive integer $N(m,\epsilon)$ such that 
$u_n$ is $(m,\epsilon)$-good for every $n\geq N(m,\epsilon)$. 
%\textcolor{red}{
Let $I\in \I$ and $I_1\subseteq I\subseteq I_2$ be as in Lemma \ref{lem:3-1}. 
Then $I_2$ may be a disjoint union $I_2=J_1\cup J_2$ with $J_1,J_2\in \I$. If $w$ is $(m,\epsilon/2)$-good, where 
$m$ is sufficiently large depending only on $J_1,J_2$, we have 
\[
|I_2|-\epsilon<\frac{1}{|w|}\lambda(\theta(w),I_2;|w|)<|I_2|+\epsilon.
\]
%}
%Moreover, let $I=J_1\cup J_2$ be a disjoint union of $J_1,J_2\in \I$. 
%We can also define the $(m,\epsilon)$-goodness of $I$. 
%If $J_1$ and $J_2$ are $(m,\epsilon/2)$-good, then $I$ is $(m,\epsilon)$-good. 
\end{rem}
\begin{defi}\label{defi:3-2}
Let $(u_n)_{n\geq 1}$ be a sequence with  
$u_n\in A^+$ for every $n\geq 1$. \\
\begin{enumerate}
\item The sequence $(u_n)_{n\geq 1}$ is dominating if  $(|u_1|+\cdots+|u_n|)/|u_{n+1}|$ $(n=1,2,\ldots)$ 
converges monotonically to 0.\\
\item The sequence $(u_n)_{n\geq 1}$ is asymptotically stable if the following holds: 
For every positive integer $m$ and positive real number $\epsilon$, 
there exists positive integer $N=N(m,\epsilon)$ such that 
if $n\geq N$, we can find $\ell'(n)$ such that 
\[
\ell'(n)<\min\{|u_{n+1}|,\epsilon|u_n|\} \mbox{ and }
\lim_{n\to\infty}\ell'(n)=\infty
\]
and that
\[
u_{n+1}|_{j} \mbox{ is $(m,\epsilon)$-good for every 
$\ell'(n)\leq j \leq |u_{n+1}|$}.
\]
\end{enumerate}
\end{defi}
\begin{lem}\label{lem:3-2}
Let $(u_n)_{n\geq 1}$ be a sequence with 
$u_n\in A^+$ for every $n\geq 1$. 
Assume that $(u_n)_{n\geq 1}$ is dominating and asymptotically stable. 
Let $\bt=(t_n)_{n\in\Z}\in \W_0\subseteq \W$ be defined as follows: 
\begin{align*}
\begin{cases}
t_n=0  \mbox{ if }n\leq -1, \\
(t_n)_{n\geq 0}=u_1u_2u_3\ldots.
\end{cases}
\end{align*}
Then $\bt$ is generic. 
\end{lem}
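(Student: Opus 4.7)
The goal is to prove $\lim_{N \to \infty} N^{-1}\lambda(\theta(\bt), I; N) = |I|$ for every $I \in \I$. Set $S_n := |u_1| + \cdots + |u_n|$. My plan is to first establish the limit along the subsequence $N = S_n$, exploiting the dominating property, and then extend to all $N$ via asymptotic stability.

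Fix $I \in \I$ and $\epsilon > 0$. Apply \refl{3-1} to pick $I_1 \subseteq I \subseteq I_2$ with $|I_j|$ within $\epsilon$ of $|I|$, together with the associated constant $L = L(I,I_1,I_2)$. Choose $m$ large enough that every $(m,\epsilon/2)$-good word $w$ satisfies $|\lambda(\theta(w),I_j;|w|)/|w| - |I_j|| < \epsilon$ for $j=1,2$ (for $j=2$ one uses the remark following the definition of $(m,\epsilon)$-good to handle the case when $I_2$ is a disjoint union of two elements of $\I$). Using the shift identity from Remark~3.1, decompose
\[
\lambda(\theta(\bt),I;S_{n+1}) = \lambda(\theta(\bt),I;S_n) + \lambda(\theta(\sigma^{S_n}(\bt)),I;|u_{n+1}|).
\]
Since $\sigma^{S_n}(\bt)$ and $\gamma(\ov{u_{n+1}})$ agree on positions $0,\ldots,|u_{n+1}|-1$, \refl{3-1} bounds the second summand within $L$ of $\lambda(\theta(u_{n+1}),I_j;|u_{n+1}|)$ for $j=1,2$. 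Because asymptotic stability (applied with $j=|u_{n+1}|$) forces $u_{n+1}$ to be $(m,\epsilon/2)$-good for all large $n$, this summand lies in $|u_{n+1}|(|I| \pm 2\epsilon) \pm L$. The first summand is at most $S_n$, and the dominating property gives $|u_{n+1}|/S_{n+1} \to 1$ together with $S_n/S_{n+1}\to 0$; dividing by $S_{n+1}$ and letting $\epsilon \to 0$ yields $\lambda(\theta(\bt),I;S_n)/S_n \to |I|$.

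For arbitrary $N$ with $S_n \le N < S_{n+1}$, write $N = S_n + j$. If $j \ge \ell'(n)$, then asymptotic stability ensures $u_{n+1}|_j$ is $(m,\epsilon/2)$-good, so \refl{3-1} applied to $\sigma^{S_n}(\bt)$ and $\gamma(\ov{u_{n+1}|_j})$ yields $\lambda(\theta(\sigma^{S_n}(\bt)),I;j) = j|I| + O(\epsilon j) + O(L)$; combining with the first-stage estimate $\lambda(\theta(\bt),I;S_n) = |I|S_n + o(S_n)$ gives $\lambda(\theta(\bt),I;N)/N = |I| + O(\epsilon) + o(1)$. If $j < \ell'(n)$, then asymptotic stability forces $j < \epsilon|u_n| \le \epsilon S_n$, so $N \in [S_n,(1+\epsilon)S_n]$, and the trivial bound $0 \le \lambda(\theta(\bt),I;N) - \lambda(\theta(\bt),I;S_n) \le j$ together with the first-stage estimate again produces $\lambda(\theta(\bt),I;N)/N = |I| + O(\epsilon) + o(1)$. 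Letting $\epsilon \to 0$ gives genericity.

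The principal obstacle is the small-$j$ regime: the tail word $u_{n+1}|_j$ need not be good, so one cannot apply \refl{3-1} directly to the fresh contribution. Instead, one must lean on the density already built up at $S_n$ together with the asymptotic-stability bound $\ell'(n) < \epsilon|u_n|$ to guarantee that $N$ stays close to $S_n$ in this regime. A secondary technical point is verifying that the constant $L$ in \refl{3-1} is independent of $n$, which holds automatically since it depends only on $I$, $I_1$, $I_2$, all of which are fixed at the outset.
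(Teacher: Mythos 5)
Your proposal is correct and follows essentially the same route as the paper's proof: establish the frequency estimate along the block boundaries $S_n=|u_1\cdots u_n|$ using Lemma~\ref{lem:3-1} with the enclosing intervals $I_1\subseteq I\subseteq I_2$ and the goodness of the full blocks supplied by asymptotic stability, then treat intermediate $N=S_n+j$ by splitting into $j\ge \ell'(n)$ (where $u_{n+1}|_j$ is good and Lemma~\ref{lem:3-1} applies to the shifted sequence) and $j<\ell'(n)<\epsilon|u_n|$ (where $N$ stays within a factor $1+\epsilon$ of $S_n$ and trivial bounds suffice). The only differences are cosmetic bookkeeping (e.g.\ you absorb the constant $L$ by dividing by $N$ rather than by $|v|$), so no further comment is needed.
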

\begin{proof}
Note that $(u_n)_{n\geq 1}$ is generic. For any positive integer $n$, 
put $U_n:=u_1u_2\ldots u_n$. For every sufficiently large integer 
$\ell$, we have $\bt|_{\ell}$ is represented as $U_n v$, where 
$n\geq 2$ and 
$v=v(\ell)\in \A^*$ is a proper prefix of $u_{n+1}$. 
For the proof of Lemma \ref{lem:3-2}, it suffices to show for every 
$I\in \I$ and every positive real number $\epsilon$, the following holds: 
There exists a positive integer $n_1=n_1(I,\epsilon)$ depending only on 
$I$ and $\epsilon$ such that if $\ell\geq n_1$, then 
\begin{align}\label{eqn:3-4}
|I|-\epsilon\leq \frac1{\ell}\lambda(\theta(\bt),I;\ell)
\leq |I|+\epsilon.
\end{align}
We first show that the above claim holds when 
$\ell \geq n_2(I,\epsilon)$ with suitable constant $n_2(I,\epsilon)$ under the assumption that  
$\ell=|U_n|$ for some $n\geq 2$. Note that $v$ is the empty word. 
It suffices to consider the case where $\epsilon$ is sufficiently small 
depending on $I$. 
We take $I_1=I_1(I,\epsilon),  I_2=I_2(I,\epsilon)$ as in Lemma \ref{lem:3-1} with 
\begin{align}\label{eqn:3-5}
|I_2|<|I_1|+\frac{\epsilon}{4}.
\end{align}
We estimate lower bounds for $\lambda(\theta(\bt),I;\ell)$. 
Observe that 
\begin{align*}
\lambda(\theta(\bt),I;\ell)
&=
\lambda(\theta(\bt),I;|U_n|)
\\
&\geq 
\lambda(\theta(\bt),I;|U_{n-1}|,|U_n|)
\\
&=
\lambda(\theta(\sigma^{|U_{n-1}|}(\bt)),I;|u_n|),
\end{align*}
where we use (\ref{eqn:3-3}) and $|U_n|-|U_{n-1}|=|u_n|$ for the 
last equality. 
Note that $u_n$ is a prefix of $\sigma^{|U_{n-1}|}(\bt)$. 
Since $I_1$ and $I_2$ are chosen depending only on 
$I$ and $\epsilon$, we see by Lemma \ref{lem:3-1} with $N=|u_n|$ that 
there exists a positive integer $L=L(I,\varepsilon)$, depending only on 
$I$ and $\epsilon$, satisfying the following: 
\[
\lambda(\theta(\bt),I;\ell)
\geq 
-L+
\lambda(\theta(u_n),I_1;|u_n|).
\]
Note that $\lim_{n\to\infty}|u_n|/|U_n|=1$ because 
$(u_n)_{n\geq 1}$ is dominating. 
Since $(u_n)_{n\geq 1}$ is generic, if $\ell=|U_n|$ is 
sufficiently large depending only on $I$ and $\epsilon$, then we obtain by (\ref{eqn:3-5}) that 
\begin{align*}
\lambda(\theta(\bt),I_1;\ell)
&\geq 
-L+|u_n|\left(|I_1|-\frac{\epsilon}4\right)\\
&>
-L+|u_n|\left(|I|-\frac{\epsilon}2\right)\\
&>
-L+|U_n|\left(|I|-\frac{3\epsilon}4\right)\\
&>
|U_n| (|I|-\epsilon)=\ell (|I|-\epsilon).
\end{align*}
Similarly, we estimate upper bounds for $\lambda(\theta(\bt),I;\ell)$ as follows:  
Note that $\lim_{n\to\infty}|U_{n-1}|/|U_n|=0$ because 
$(u_n)_{n\geq 1}$ is dominating. Hence, if $\ell=|U_n|$ is sufficiently large depending only on $I$ and $\epsilon$, then 
\begin{align*}
\lambda(\theta(\bt),I;\ell)
&\leq 
|U_{n-1}|
+
\lambda(\theta(\sigma^{|U_{n-1}|}(\bt)),I;|u_n|)
\\
&\leq 
|U_{n-1}|
+L+
\lambda(\theta(u_n),I_2;|u_n|)\\
&\leq 
|U_{n-1}|
+L+
|u_n|\left(|I_2|+\frac{\epsilon}4\right)
\\
&\leq 
|U_{n-1}|
+L+
|U_n|\left(|I|+\frac{\epsilon}2\right)\\
&
<|U_n|(|I|+\epsilon)=\ell (|I|+\epsilon).
\end{align*}
In what follows, we show that (\ref{eqn:3-4}) holds for every 
$\ell\geq n_1(I,\epsilon)(\geq n_2(I,\epsilon))$. 
Note that $\ell=|U_n|+|v|$. 
We again take $I_1=I_1(I,\epsilon),I_2=I_2(I,\epsilon)$ as in Lemma \ref{lem:3-1} with 
(\ref{eqn:3-5}). Since $(u_n)_{n\geq 1}$ is asymptotically stable, 
we can take $\ell'=\ell'(n)$ for any $n\geq N(m,\epsilon/3)$ in Definition \ref{defi:3-2} (2), where we take suitable $m$ depending on $I_1$ and $I_2$. 
%by using $\epsilon/3$ and suitable $m$ depending on $I_1$ and $I_2$. 
Note that 
\[
\ell'<\frac{\epsilon}{3}|u_n|<\frac{\epsilon}2 |U_n|.
\]
First, we consider the case where 
$
|v|<\ell'(<\epsilon|U_n|\cdot 2^{-1}).
$
%Using (\ref{eqn:3-4}) in the case where $\ell=|U_n|$ with suitable $n$, 
Since we proved (\ref{eqn:3-4}) in the case where $\ell=|U_n|$, 
we see 
\begin{align*}
\lambda(\theta(\bt),I;\ell)
\geq \lambda(\theta(\bt),I;|U_n|)>|U_n|\left(|I|-\frac{\epsilon}2\right). 
\end{align*}
Observing that 
\begin{align*}
\frac{|U_n|}{\ell}=\frac{|U_n|}{|U_n|+|v|}>\frac{1}{1+\epsilon/2}
>1-\frac{\epsilon}2,
\end{align*}
we get by $|I|\leq 1$ that 
\[
\lambda(\theta(\bt),I;\ell)>
\ell\left(1-\frac{\epsilon}2\right)\left(|I|-\frac{\epsilon}2\right)
\\
\geq \ell(|I|-\epsilon). 
\]
Similarly, we obtain upper bounds as follows: 
\begin{align*}
\lambda(\theta(\bt),I;\ell)
&\leq 
|v|
+
\lambda(\theta(\bt),I;|U_n|)\\
&<
\frac{\epsilon}{2}|U_n|+|U_n|\left(|I|+\frac{\epsilon}{2}\right)
\\
&=|U_n|(|I|+\epsilon)\leq \ell (|I|+\epsilon).
\end{align*}
Next, we consider the case of $|v|\geq \ell'$. 
Observe by (\ref{eqn:3-3}) that 
\begin{align*}
&\lambda(\theta(\bt),I;\ell)\\
&=
\lambda(\theta(\bt),I;|U_n|)
+
\lambda(\theta(\bt),I;|U_n|,|U_nv|)
\\
&=
\lambda(\theta(\bt),I;|U_n|)
+
\lambda(\theta(\sigma^{|U_n|}(\bt)),I;|v|)\\
&=:y_1+y_2
\end{align*}
and that 
\[
y_1>(|I|-\epsilon)|U_n|.
\]
Recall that $\lim_{n\to\infty}\ell'(n)=\infty$ and $|v|\geq \ell'=\ell'(n)$. 
Thus, if $n_1(I,\epsilon)$ is sufficiently large and $\ell\ge n_1(I,\epsilon)$, then $3^{-1}\epsilon |v|>L$ and $v$ is $(m,\epsilon/3)$-good. Applying Lemma \ref{lem:3-1} with $N=|v|$, 
%, then $|v|$ is sufficiently large. Moreover, since $v$ is $(m,\epsilon/3)$-good, 
we obtain by $\sigma^{|U_n|}(\bt)|_{|v|}=|v|$ that 
\begin{align*}
y_2
&\geq -L+\lambda(\theta(v),I_1;|v|)
\\
&\geq
-L+|v|\left(|I_1|-\frac{\epsilon}{3}\right)
\\
&>
-L+|v|\left(|I|-\frac{2\epsilon}{3}\right)>|v|(|I|-\epsilon).
\end{align*}
Therefore, we deduce that 
\[
y_1+y_2>(|I|-\epsilon)(|U_n|+|v|)=\ell (|I|-\epsilon). 
\]
Similarly, we see 
\[
y_1<(|I|+\epsilon)|U_n|.
\]
Moreover, 
\begin{align*}
y_2
&\leq 
L+\lambda(\theta(v),I_2;|v|)\\
&\leq 
L+|v|\left(|I_2|+\frac{\epsilon}{3}\right)\\
&<
L+|v|\left(|I|+\frac{2\epsilon}{3}\right)<|v|(|I|+\epsilon).
\end{align*}
Finally, we we deduce that 
\[
y_1+y_2<(|I|+\epsilon)(|U_n|+|v|)=\ell (|I|+\epsilon). 
\]
\end{proof}
\begin{rem}\label{rem:cccc}
    In the proof of Lemma \ref{lem:3-2}, we compare $\lambda(\bt,I;N)$ with $\lambda(\bt|_N,I_1;N)$ and $\lambda(\bt|_N,I_2;N)$. In the same way we can show the following: Suppose that $\bt\in \W$ is generic. Put $u_n:=\bt|_n$ for $n=1,2,\ldots$. Then $(u_n)_{n\ge 1}$ is generic. 
\end{rem}

\section{Completion of the proof of Theorem \ref{theo:1-1}}
Here we will use {\it Wadge reduction} to show that $\NN_{P,k}$ is $\bP_3^0$-hard when $P$ and $k$ satisfy the hypotheses of Theorem~\ref{theo:1-1}. That is, let $X$ and $Y$ be Polish spaces and let $A \subseteq X$ and $B \subseteq Y$ along with a continuous function $f:Y \to X$ where $f^{-1}(A)=B$.  Then if $B$ is $\bS^0_\alpha$-complete (resp. $\bP^0_\alpha$-complete), then $A$ is $\bS^0_\alpha$-hard ($\bP^0_\alpha$-hard). The function $f$ reduces the question of membership in $A$ to membership in $B$.

Take $\bg\in \NN_{P,k}$, namely, $\bx^+(\bg)=(x_n(\bg))_{n\geq 0}$ is uniformly distributed modulo 1. 
Recall that $\Psi(\bg)=(s_m(\bg))_{m\in \Z}\in \W$. %by (\ref{eqn:2-6}) and (\ref{eqn:2-7}). 
%Put $\bg=(g_1(X),\ldots,g_p(X))$. 
By (\ref{eqn:2-12}) we see for every $R\geq 0$ that $x_{n-R}(\bg)=x_{n}(\tau^{-R}(\bg))$ and $s_{n-R}(\bg)=s_n(\tau^{-R}(\bg))$, and so $\tau^{-R}(\bg)\in \NN_{P,k}$. Since $s_{-m}(\bg)=0$ for sufficiently large $m$, if necessary changing $\bg$ by $\tau^{-R}(\bg)\in \W$ with sufficiently large $R$, we may assume that $s_m(\bg)=0$ for every negative integer $m$. 
%Noting that 
%\[
%\tau^{-j}(\bg)=(\alpha_1^{-j}g_1(X-j),\ldots,\alpha_p^{-j} g_p(X-j))
%\in \NN_{P,k},
%\] 
%we may assume that $s_m(\bg)=0$ for any negative integer $m$. 
In particular, we have $\Psi(\bg)\in \W_0$. 
For simplicity, set 
\[
\bs=(s_m)_{m\in \Z}:=\Psi(\bg). 
\]
We denote the set of the sequences $(\beta(n))_{n\geq 1}$ of positive 
integers by $\CC=\N^{\N}$. We introduce the product topology on $\CC$ induced by the discrete topology on $\N$. For any $(\beta(n))_{n\geq 1}\in \CC$, we put 
\[
\beta'(n):=\min\{n,\beta(n)\}.
\]
For any $(\beta(n))_{n\geq 1}$, we choose sequences 
$(a_n)_{n\geq 0}$ and $(c_n)_{n\geq 0}$ of positive integers 
satisfying $a_0=c_0=1$ and the following assumptions: 
\begin{itemize}
\item[(a)] $a_n=\beta'(n) c_n$ for every $n\geq 1$.
\item[(b)] $c_n/n>2^{2n}$ for every $n\geq 1$.
\item[(c)] For any $n\geq 1$ every $m\geq c_n$, we have 
$\bs|_m$ is $(n,2^{-n-1})$-good.
\end{itemize}
Note by Remarks \ref{rem:bbbb} and \ref{rem:cccc} that assumption (c) holds when $c_n$ is sufficiently large.
Moreover, we also take a sequence $(b_n)_{n\geq 0}$ of 
nonnegative integers satisfying $b_0=0$ and choose such a sequence, sufficiently rapidly increasing, so that the following conditions are satisfied: for every $n\geq 1$, we have  
\begin{itemize}
\item[(A)] $b_n>2^{2n}$.
\item[(B)] $a_n b_n>2^{2n} a_{n+1}$.
\item[(C)] $a_n b_n>2^{2n}\sum_{j=1}^{n-1}(a_j+c_j) b_j$.
\item[(D)] The following sequence monotonically converges to 0: 
\[
\frac{1}{(a_{n+1}+c_{n+1})b_{n+1}}\sum_{j=1}^{n}(a_j+c_j) b_j \quad (n=1,2,\ldots).
\]
\end{itemize}
Moreover, let $B_0:=0$ and $B_n:=2(b_1+b_2+\cdots+b_n)$ for every 
positive integer $n$. \par
Using $(a_n)_{n\geq 0}, (b_n)_{n\geq 0}$ and $(c_n)_{n\geq 0}$, 
we define a sequence $(u_j)_{j\geq 1}$ of finite words on $\A$ 
as follows: First, 
\begin{align*}
u_j:=
\begin{cases}
\bs|_{a_1} & \mbox{ if }0<j\leq b_1,\\
0^{c_1} & \mbox{ if }b_1<j\leq 2b_1=B_1.
\end{cases}
\end{align*}
Moreover, for any $n\geq 1$, set 
\begin{align*}
u_j:=
\begin{cases}
\bs|_{a_{n+1}} & \mbox{ if }B_n<j\leq B_n+b_{n+1},\\
0^{c_{n+1}} & \mbox{ if }B_n+b_{n+1}<j\leq B_n+2b_{n+1}=B_{n+1}.
\end{cases}
\end{align*}
In this section, we do not treat $\F=\A^+/\sim$. 
Thus, $\ov{u}$ denotes an element in $\A^+$. 
We also define $(\ov{u_n}')_{n\geq 1}$ and $(\ov{u_n}'')_{n\geq 1}$ 
by 
\begin{align*}
\ov{u_n}'&=(\bs|_{a_n})^{b_n}=
\underbrace{\bs|_{a_n}\ldots \bs|_{a_n}}_{b_n\mbox{ times}}, 
\quad 
\ov{u_n}''=0^{b_n c_n}.
\end{align*}
We define the map $p:\CC\to \W_0$ by 
$p(\beta)=(t_n)_{n\in \Z}$, where  
\begin{align*}
\begin{cases}
t_n=0 & \mbox{ if }n\leq -1, \\
(t_n)_{n\geq 0}=u_1u_2u_3\ldots
=\ov{u_1}'\ov{u_1}''\ov{u_2}'\ov{u_2}''\ldots.
\end{cases}
\end{align*}
Let $N$ be a positive integer. Then $(a_n)_{n=0}^N$, $(b_n)_{n=0}^N$, $(c_n)_{n=0}^N$ can be constructed depending only on $(\beta(n))_{n=1}^{N+1}$. Thus, we can construct $(a_n)_{n\ge 0}$, $(b_n)_{n\ge 0}$, $(c_n)_{n\ge 0}$ so that $p$ is continuous. In what follows, we assume that $p$ is continuous. 
%By choosing $(a_n)_{n\geq 0}, (b_n)_{n\geq 0}$ and $(c_n)_{n\geq 0}$ suitably, we may assume that $p$ is continuous. \par
Finally, we define the map $\pi:\CC\to \Xi_k$ by 
$\pi=\theta\circ p$. Since $\theta|_{\W_0}$ is continuous, 
we see that $\pi$ is also continuous. 
Set 
\[
\CC_3:=\left\{(\beta(n))_{n\geq 1}\in \CC
\ \left| \
\lim_{n\to\infty}\beta(n)=\infty\right.\right\},
\]
which is a $\bP_3^0$-complete set. 
For the proof of Theorem \ref{theo:1-1}, 
it suffices to prove the following: 
\begin{lem}\label{lem:4-1}
Suppose that $\beta\in \CC_3$. Then the sequence 
$(\ov{u_n}' \ \ov{u_n}'')_{n\geq 1}$ is dominating and asymptotically 
stable. In particular, $\pi(\beta)\in \NN_{P,k}$ by Lemma \ref{lem:3-2}. 
\end{lem}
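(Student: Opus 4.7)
The plan is to verify the two defining properties and invoke Lemma~\ref{lem:3-2}. Writing $v_n:=\ov{u_n}'\ov{u_n}''$, we have $|v_n|=(a_n+c_n)b_n$, so the dominating property of $(v_n)_{n\ge 1}$ is exactly assertion (D) of the construction and requires no further work.

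For asymptotic stability I would take $\ell'(n):=c_{n+1}$. Conditions~(i) and~(ii) of Definition~\ref{defi:3-2}(2) are immediate: $c_{n+1}<a_{n+1}\le|v_{n+1}|$ trivially, while (B) combined with $a_{n+1}\ge c_{n+1}$ gives $c_{n+1}<2^{-2n}a_nb_n<\epsilon|v_n|$ for large $n$, and (b) forces $c_{n+1}\to\infty$. The real content is showing that $v_{n+1}|_j$ is $(m,\epsilon)$-good for every $c_{n+1}\le j\le|v_{n+1}|$, once $n\ge N(m,\epsilon)$. I would split this into three cases and use Lemma~\ref{lem:3-1} together with Remark~\ref{rem:3-1} (with the interval enlargements handled via Remark~\ref{rem:bbbb}).

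Case A is $c_{n+1}\le j\le a_{n+1}$: here $v_{n+1}|_j=\bs|_j$, which is $(n+1,2^{-n-2})$-good by condition~(c), hence $(m,\epsilon)$-good once $n+1\ge m$ and $2^{-n-2}<\epsilon$. Case B is $a_{n+1}<j\le a_{n+1}b_{n+1}$; writing $j=qa_{n+1}+r$ with $q\ge 1,\,0\le r<a_{n+1}$, I would split $\lambda(\theta(v_{n+1}|_j),I_i;j)$ via Remark~\ref{rem:3-1} into $q$ block counts plus a tail. For each block, $\sigma^{\ell a_{n+1}}(\gamma(\ov{v_{n+1}|_j}))$ agrees with $\gamma(\ov{\bs|_{a_{n+1}}})$ on the first $a_{n+1}$ coordinates, so Lemma~\ref{lem:3-1} combined with the $(m,\epsilon)$-goodness of $\bs|_{a_{n+1}}$ (from (c), since $a_{n+1}\ge c_{n+1}$) yields that the block count is $a_{n+1}(|I_i|+O(\epsilon))+O(1)$. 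For the tail, either $r\ge c_{n+1}$ and the same argument works with $\bs|_r$ in place of $\bs|_{a_{n+1}}$, or $r<c_{n+1}$, in which case $r/j\le c_{n+1}/a_{n+1}=1/\beta'(n+1)\to 0$ since $\beta\in\CC_3$, so the tail is negligible compared to $j$. Case C is $a_{n+1}b_{n+1}<j\le(a_{n+1}+c_{n+1})b_{n+1}$: trailing zeros are absorbed by $\gamma$ so that $\theta(v_{n+1}|_j)=\theta((\bs|_{a_{n+1}})^{b_{n+1}})$, reducing the main block of length $a_{n+1}b_{n+1}$ to the $q=b_{n+1},r=0$ subcase of Case B; the tail has length $s\le b_{n+1}c_{n+1}$ with $s/j\le c_{n+1}/(a_{n+1}+c_{n+1})=1/(\beta'(n+1)+1)\to 0$ by $\beta\in\CC_3$, hence the count it contributes is likewise negligible.

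The principal obstacle is Case B when $r$ is a non-negligible fraction of $a_{n+1}$: without the divergence $\beta'(n+1)\to\infty$ supplied by $\beta\in\CC_3$, a short tail $\bs|_r$ with $r<c_{n+1}$ could skew the empirical frequency by a constant factor and spoil $(m,\epsilon)$-goodness. It is exactly this divergence that forces $c_{n+1}/a_{n+1}\to 0$, and simultaneously controls the zero-padded tail of Case C; the conditions (A)--(D) and (a)--(c) in the construction are calibrated precisely so that these estimates combine with Lemma~\ref{lem:3-1} to yield uniform goodness for $j$ ranging over the whole interval $[c_{n+1},|v_{n+1}|]$, giving asymptotic stability and so, via Lemma~\ref{lem:3-2}, the conclusion $\pi(\beta)\in\NN_{P,k}$.
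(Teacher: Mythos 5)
Your proof is correct, and its skeleton is the one the paper uses: dominating is read off from condition (D), and asymptotic stability is verified by cutting prefixes of $\ov{u_n}'\,\ov{u_n}''$ into copies of $\bs|_{a_n}$, applying Lemma~\ref{lem:3-1} blockwise via Remark~\ref{rem:3-1} (with Remark~\ref{rem:bbbb} handling the enlarged intervals), and controlling the zero-padded part through $c_n/a_n=1/\beta'(n)\to 0$; Lemma~\ref{lem:3-2} then finishes. Where you genuinely deviate is in the choice of the stability witness: you take $\ell'(n)=c_{n+1}$, whereas the paper takes $\ell'(n)=2^{n}a_{n}$ (for prefixes of $\ov{u_n}'\,\ov{u_n}''$) and isolates the within-$\ov{u_n}'$ range as Lemma~\ref{lem:4-3}. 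With the paper's larger threshold every prefix under consideration contains at least $2^n$ full blocks, so the incomplete block of length $r<a_n$ is automatically a $2^{-n}$-fraction of the prefix and condition (c) is needed only for the full blocks $\bs|_{a_n}$; your smaller threshold forces the extra case analysis you describe, which you resolve by invoking (c) for partial blocks of length at least $c_{n+1}$ and the divergence $\beta'(n+1)\to\infty$ for shorter ones. Both routes are sound: yours yields goodness on the wider range $j\ge c_{n+1}$ (a slightly stronger stability statement, with no need for a separate analogue of Lemma~\ref{lem:4-3} restricted to long prefixes), at the price of feeding the hypothesis $\beta\in\CC_3$ into the block-decomposition case as well, while in the paper that case needs only $a_n\to\infty$ and the divergence of $\beta'$ enters only for $a_nb_n<\ell\le(a_n+c_n)b_n$. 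Two small points worth making explicit in a final write-up: $c_{n+1}<a_{n+1}$ only once $\beta'(n+1)\ge 2$, which is harmless since the definition of asymptotic stability only concerns $n\ge N(m,\epsilon)$; and the per-block error from Lemma~\ref{lem:3-1} accumulates to $qL$, which is negligible because $L$ depends only on $I,\epsilon$ and $L/a_{n+1}\to 0$.
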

\begin{lem}\label{lem:4-2}
Suppose that $\beta\in \CC\backslash \CC_3$. 
Then $\pi(\beta)\not\in \NN_{P,k}$. 
\end{lem}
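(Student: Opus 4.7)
Since $\beta \notin \CC_3$, there exist an integer $M \geq 1$ and an infinite set $J \subseteq \N$ with $\beta(n) \leq M$ for every $n \in J$; for such $n$ we have $\beta'(n) \leq M$ and hence $a_n = \beta'(n) c_n \leq M c_n$. The strategy is to exploit the zero block $\ov{u_n}'' = 0^{b_n c_n}$ built into $\bt := p(\beta)$: when $\beta'(n) \leq M$, this block constitutes a fraction $\geq 1/(M+1) - o(1)$ of the prefix of length $T_n := \sum_{j=1}^n (a_j + c_j) b_j$, and on its interior the value $e(x_m(\pi(\beta)))$ is forced close to $0$. Picking a small target interval $I'$ around $0$ with $|I'| < 1/(M+1)$ will then produce an excess count incompatible with uniform distribution.

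\textbf{Comparison with the all-zero sequence.} Fix $\ell \geq 2$ with $|I'| := 2^{1-\ell} < 1/(M+1)$, and set
\[
I' = [-2^{-\ell}, 2^{-\ell}), \qquad I'_1 = [-2^{-\ell-1}, 2^{-\ell-1}), \qquad I'_2 = [-2^{-\ell+1}, 2^{-\ell+1});
\]
these lie in $\I$ with $0 \in I'_1 \subsetneq I' \subsetneq I'_2$ and satisfy the hypotheses of Lemma~\ref{lem:3-1}, yielding a constant $L = L(I', I'_1, I'_2)$. The zero block $\ov{u_n}''$ occupies positions $[T_n - b_n c_n, T_n)$ of $\bt$, so $\sigma^{T_n - b_n c_n}(\bt)$ agrees with the all-zero sequence $\mathbf{0} \in \W_0$ on $[0, b_n c_n)$. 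Since $\theta(\mathbf{0})$ is the zero tuple of $\Xi_k$, one has $e(x_m(\theta(\mathbf{0}))) = 0 \in I'_1$ for every $m$, whence $\lambda(\theta(\mathbf{0}), I'_1; b_n c_n) = b_n c_n$. Using Remark~\ref{rem:3-1} and the lower bound in Lemma~\ref{lem:3-1},
\[
\lambda(\theta(\bt), I'; T_n) \geq \lambda\bigl(\theta(\sigma^{T_n - b_n c_n}(\bt)), I'; b_n c_n\bigr) > b_n c_n - L.
\]

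\textbf{Contradicting uniform distribution.} Writing $T_n = T_{n-1} + (\beta'(n) + 1) c_n b_n$,
\[
\frac{b_n c_n}{T_n} = \frac{1}{\beta'(n) + 1 + T_{n-1}/(b_n c_n)}.
\]
Condition (D) implies $T_{n-1}/((a_n + c_n) b_n) \to 0$; for $n \in J$ we have $a_n + c_n \leq (M+1) c_n$, so $T_{n-1}/(b_n c_n) \to 0$ along $J$. Combining with the previous bound,
\[
\liminf_{J \ni n \to \infty} \frac{\lambda(\theta(\bt), I'; T_n)}{T_n} \geq \frac{1}{M+1} > |I'|.
\]
If $\bx^+(\pi(\beta))$ were uniformly distributed modulo one, the ratio $\lambda(\theta(\bt), I'; N)/N$ would converge to $|I'|$, contradicting the liminf inequality along the subsequence $(T_n)_{n \in J}$. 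The key technical subtlety is matching $|I'|$ to the worst-case inflation factor $1/(M+1)$ and placing $0$ strictly inside $I'_1$ (not merely at its boundary), so that Lemma~\ref{lem:3-1} actually delivers the full $b_n c_n$ lower bound rather than $0$; the remaining steps are bookkeeping with the growth conditions (A)--(D).
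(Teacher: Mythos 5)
Your proof is correct and follows essentially the same route as the paper's: isolate the zero block $\ov{u_n}''$ at the end of the $n$-th super-block, compare with the all-zero sequence via Remark~\ref{rem:3-1} and Lemma~\ref{lem:3-1} on a small dyadic interval around $0$ with length below $1/(M+1)$, and conclude a density excess along the prefix lengths $T_n=|V_n|$ that contradicts uniform distribution. The only cosmetic differences are that you work with $\beta'(n)\le M$ and condition (D) where the paper fixes $\beta'(n)=M$ exactly and invokes condition (C); both yield the same comparison $T_n\approx(a_n+c_n)b_n\le(M+1)b_nc_n$.
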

For the proof of Lemma \ref{lem:4-1}, we check the following: 
\begin{lem}\label{lem:4-3}
Suppose that $\beta\in \CC_3$. 
Let $m$ be a positive integer and $\epsilon$ a positive real number. 
Then there exists a positive integer $N=N(m,\epsilon)$, 
depending only on $m$ and $\epsilon$, satisfying the following: 
Let $n$ be an integer with $n\geq N(m,\epsilon)$. 
Put $\ell':=2^n a_n$. 
Then, for every integer $\ell$ with $\ell'\leq \ell\leq |\ov{u_n}'|$, we have 
$\ov{u_n}'|_{\ell}$ is $(m,\epsilon)$-good. 
\end{lem}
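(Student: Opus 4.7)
The approach exploits the periodic structure $\ov{u_n}' = (\bs|_{a_n})^{b_n}$: any prefix of length $\ell \in [2^n a_n, a_n b_n]$ consists of $q \geq 2^n$ complete copies of $\bs|_{a_n}$ followed by a residual prefix $\bs|_r$ of length $r < a_n$. Since each copy has counting statistics close to those of $\bs|_{a_n}$ alone (via Lemma~\ref{lem:3-1}), and since assumption~(c) forces $\bs|_{a_n}$ itself to be $(n, 2^{-n-1})$-good, averaging over copies and dividing by $\ell$ yields a quantity close to $|I_j|$ for every $j = 1, \ldots, m$, once $n$ is sufficiently large.

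Concretely, write $\ell = q a_n + r$ and let $\bt = \gamma(\ov{\ov{u_n}'|_\ell}) \in \W_0$. Using Remark~\ref{rem:3-1}, decompose
\[
\lambda(\theta(\bt), I_j; \ell) = \sum_{i=0}^{q-1} \lambda(\theta(\sigma^{i a_n}(\bt)), I_j; a_n) + \lambda(\theta(\sigma^{q a_n}(\bt)), I_j; r).
\]
A direct check shows that $\sigma^{i a_n}(\bt)|_{a_n} = \bs|_{a_n} = \gamma(\ov{\bs|_{a_n}})|_{a_n}$ for every $i \in \{0, \ldots, q-1\}$, so the two sequences $\sigma^{i a_n}(\bt)$ and $\gamma(\ov{\bs|_{a_n}})$ are comparable via Lemma~\ref{lem:3-1}. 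For each $j = 1, \ldots, m$, fix auxiliary intervals $I_1^{(j)} \subseteq I_j \subseteq I_2^{(j)}$ as in Lemma~\ref{lem:3-1} with $|I_1^{(j)}| > |I_j| - \epsilon/4$ and $|I_2^{(j)}| < |I_j| + \epsilon/4$, and put $L := \max_j L(I_j, I_1^{(j)}, I_2^{(j)})$, which depends only on $m$ and $\epsilon$. Lemma~\ref{lem:3-1} then yields
\[
-L + \lambda(\theta(\bs|_{a_n}), I_1^{(j)}; a_n) \leq \lambda(\theta(\sigma^{i a_n}(\bt)), I_j; a_n) \leq L + \lambda(\theta(\bs|_{a_n}), I_2^{(j)}; a_n),
\]
where the case in which $I_2^{(j)}$ is a disjoint union of two elements of $\I$ is handled piecewise in the manner of Remark~\ref{rem:bbbb}.

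Next, choose $N(m, \epsilon)$ large enough that for every $n \geq N$ all auxiliary intervals $I_1^{(j)}, I_2^{(j)}$ (and the components of any disjoint union) appear among $I_1, \ldots, I_n$ in the enumeration of $\I$, that $2^{-n-1} < \epsilon/4$, and that $L/c_n < \epsilon/4$. Since $a_n \geq c_n$, assumption~(c) applies to $\bs|_{a_n}$ and yields $|\lambda(\theta(\bs|_{a_n}), I_\bullet^{(j)}; a_n) - |I_\bullet^{(j)}|\, a_n| \leq 2^{-n-1} a_n$. Summing the inequalities over $i = 0, \ldots, q-1$, bounding the residual term trivially by $r < a_n$, and dividing by $\ell$, one controls the three resulting error ratios: $qL/\ell \leq L/a_n \leq L/c_n < \epsilon/4$; $r/\ell \leq 1/q \leq 2^{-n}$; and $q a_n/\ell \geq q/(q+1) \geq 1 - 2^{-n}$. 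Combining these estimates yields
\[
|I_j| - \epsilon < \frac{1}{\ell}\, \lambda(\theta(\ov{u_n}'|_\ell), I_j; \ell) < |I_j| + \epsilon
\]
uniformly for $\ell \in [2^n a_n, a_n b_n]$ and $j = 1, \ldots, m$, which is exactly the $(m, \epsilon)$-goodness claim.

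The main obstacle is securing uniformity in $\ell$: the constant $L$ from Lemma~\ref{lem:3-1} must be pinned down once and for all as a function of $m$ and $\epsilon$, and then the cumulative boundary error $qL$ — accrued once per copy at each inter-copy seam — must remain negligible compared to $\ell \geq q a_n$. This is precisely what condition~(b) on the rapid growth of $c_n$ guarantees, since $a_n \geq c_n > n 2^{2n}$. The only other delicate piece of bookkeeping is the two-interval case for $I_2^{(j)}$, which is standard via Remark~\ref{rem:bbbb}; notably, the hypothesis $\beta \in \CC_3$ is not actually required for this lemma but matches the context of its use in Lemma~\ref{lem:4-1}.
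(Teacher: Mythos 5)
Your proof is correct and follows essentially the same route as the paper's: split the prefix of $\ov{u_n}'$ into $q\ge 2^n$ copies of $\bs|_{a_n}$ plus a residual of length $<a_n$, shift via Remark~\ref{rem:3-1}, compare each copy with $\bs|_{a_n}$ through Lemma~\ref{lem:3-1} and auxiliary intervals, invoke assumption~(c), and absorb the errors $qL/\ell$ and $r/\ell$ using $a_n\ge c_n>n2^{2n}$. Your side remark that the hypothesis $\beta\in\CC_3$ is not actually needed here is also consistent with the paper, whose argument only uses $a_n\to\infty$.
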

\begin{proof}
$\ov{u_n}'|_{\ell}$ is represented as $\wi{u_n}'\wi{u_n}''$, where 
$\wi{u_n}'=(\bs|_{a_n})^r$ with $r=\lfloor \ell/a_n\rfloor\geq 2^n$ and 
$\wi{u_n}''\in \A^*$ is a proper prefix of $\bs|_{a_n}$ with 
$|\wi{u_n}''|<a_n$. Let $I\in \I$ and $\epsilon>0$. 
For the proof of Lemma \ref{lem:4-3}, it suffices to verify that 
there exists a positive integer $n_1=n_1(I,\epsilon)$, 
depending only on $I$ and $\epsilon$, satisfying the following: 
If $n\geq n_1$, then, for every integer 
$\ell$ with $\ell'\leq \ell\leq |\ov{u_n}'|$, we have 
\begin{align}\label{eqn:4-1}
\ell(|I|-\epsilon)  < \lambda(\theta(\wi{u_n}'\wi{u_n}''),I;\ell)
< \ell(|I|+\epsilon).
\end{align}
It suffices to consider the case where $\epsilon$ is sufficiently 
small depending on $I$. 
We take $I_1,I_2$ with 
$
I_1\subseteq I\subseteq I_2
$
as in Lemma \ref{lem:3-1} so that 
\[
|I_2|-|I_1|<\frac{\epsilon}{4}. 
\]
We first estimate lower bounds for 
$\lambda(\theta(\wi{u_n}'\wi{u_n}''),I;\ell)$. 
We see 
\begin{align*}
\lambda(\theta(\wi{u_n}'\wi{u_n}''),I;\ell)
&\geq
\lambda(\theta(\wi{u_n}'\wi{u_n}''),I;ra_n)
\\
&=
\sum_{j=0}^{r-1}\lambda(\theta(\wi{u_n}'\wi{u_n}''),I;ja_n,(j+1)a_n).
\end{align*}
%We regard $\A^{*}$ as a subset of $\W_0$ by identifying $t_0t_1\ldots t_{j-1}\in \A^{*}$ by 
We identify \(\A^{*}\) with a subset of \(\W_0\) by mapping each finite word
\(t_0t_1\cdots t_{j-1}\in \A^{*}\) to the sequence \((t_n)_{n\in \Z}\in \W_0\) defined by
\(t_n=0\) for all \(n\not\in \{0,1,\ldots,j-1\}\).
Observe that for every $0\le j\le r-1$ that $\sigma^{ja_n}(\wi{u_n}'\wi{u_n}'')|_{a_n}=\bs|_{a_n}$.
Thus, combining Remark \ref{rem:3-1} and Lemma \ref{lem:3-1}, we get for every sufficiently large $n$ depending only on $I$ and $\epsilon$ that 
\begin{align*}
\lambda(\theta(\wi{u_n}'\wi{u_n}''),I;\ell)
&\geq 
\sum_{j=0}^{r-1}\lambda(\theta(\sigma^{ja_n}(\wi{u_n}'\wi{u_n}'')),I;a_n)
\\
&\geq 
\sum_{j=0}^{r-1}
\big(-L+\lambda(\theta(\bs),I_1;a_n)\big)
\\
&\geq 
\sum_{j=0}^{r-1}
\left(
-L+a_n\left(|I_1|-\frac{\epsilon}{4}\right)
\right)\\
&>
-rL+a_nr\left(|I|-\frac{\epsilon}{2}\right).
\end{align*}
Noting by $\ell\geq \ell'=2^n a_n$ that, for every sufficiently large $n$, 
\[
\frac{a_n r}{\ell}=\frac{a_n\lfloor \ell/a_n\rfloor}{\ell}
\geq 1-\frac{a_n}{\ell}\geq 1-\frac{\epsilon}{4},
\]
we obtain by $|I|\leq 1$ that 
\begin{align*}
\lambda(\theta(\wi{u_n}'\wi{u_n}''),I;\ell)
&> 
-\left\lfloor\frac{\ell}{a_n}\right\rfloor L
+
\ell\left(1-\frac{\epsilon}{4}\right)\left(|I|-\frac{\epsilon}{2}\right)\\
&\geq 
-\left\lfloor\frac{\ell}{a_n}\right\rfloor L
+
\ell\left(1-\frac{3\epsilon}{4}\right)\\
&>\ell(1-\epsilon),
\end{align*}
where we used the assumption that $\lim_{n\to\infty} a_n=\infty$. 
Similarly, we see 
\begin{align*}
\lambda(\theta(\wi{u_n}'\wi{u_n}''),I;\ell)
&\leq
|\wi{u_n}''|
+
\lambda(\theta(\wi{u_n}'\wi{u_n}''),I;ra_n)\\
&<
a_n
+
\sum_{j=0}^{r-1}\lambda(\theta(\wi{u_n}'\wi{u_n}''),I;ja_n,(j+1)a_n)\\
&\leq
a_n
+
\sum_{j=0}^{r-1}\lambda(\theta(\sigma^{ja_n}(\wi{u_n}'\wi{u_n}'')),I;a_n)
\\
&\leq
a_n
+
\sum_{j=0}^{r-1}
\big(L+\lambda(\theta(\bs),I_2;a_n)\big)\\
&\leq
a_n
+
\sum_{j=0}^{r-1}\left( L+\left(|I_2|+\frac{\epsilon}{4}\right)a_n\right)\\
&<
a_n+rL+\left(|I|+\frac{\epsilon}{2}\right)a_n r
\end{align*}
Using $\ell \geq 2^n a_n$ again, we deduce for every sufficiently 
large $n$ that 
\begin{align*}
\lambda(\theta(\wi{u_n}'\wi{u_n}''),I;\ell)
&<
a_n+\frac{\ell}{a_n}L +\left(|I|+\frac{\epsilon}{2}\right)\ell
<(|I|+\epsilon)\ell.
\end{align*}
Finally, we proved (\ref{eqn:4-1}). 
\end{proof}
\begin{proof}[Proof of Lemma \ref{lem:4-1}]
Put 
\begin{align*}
\ov{U_n}:=\ov{u_n}' \ \ov{u_n}''.
\end{align*}
Then the sequence $(\ov{U_n})_{n\geq 1}$ is dominating by 
assumption (D) and $|\ov{U_n}|=(a_n+c_n)b_n$. Let 
$I\in \I$ and $\epsilon>0$. 
For the proof of Lemma \ref{lem:4-1}, 
it suffices to show the existence of a positive integer 
$n_0=n_0(I,\epsilon)$, depending only on $I$ and $\epsilon$, satisfying 
the following: 
For any $n\geq n_0$, put $\ell':=2^n a_n$. Then for every integer $\ell$ 
with $\ell'\leq \ell\leq |U_n|$, we have 
\begin{align}\label{eqn:4-2}
|I|-\epsilon< \frac1{\ell}\lambda(\theta(\ov{U_n}|_{\ell}),I;\ell)
< |I|+\epsilon.
\end{align}
In fact, it is easily seen that 
\[
\lim_{n\to\infty}\frac{\ell'}{|\ov{U_n}|}=
\lim_{n\to\infty}\frac{2^n a_n}{(a_n+c_n)b_n}=0
\]
by Assumption (A). 
We may assume that $\epsilon$ is sufficiently small depending on 
$I$. In what follows, we denote $v:=\ov{U_n}|_{\ell}$ for simplicity. 
If $\ell\leq |\ov{u_n}'|=a_nb_n$, then (\ref{eqn:4-2}) follows from 
Lemma \ref{lem:4-3}. In what follows, we suppose that 
\[
a_nb_n<\ell\leq |\ov{U_n}|=b_n(a_n+c_n).
\]
Let $I_1,I_2$ be as in Lemma \ref{lem:3-1} with 
$I_1\subseteq I\subseteq I_2$ and 
\[
|I_2|<|I_1|+\frac{\epsilon}4.
\]
Using Lemma \ref{lem:3-1}, we estimate lower bounds for 
$\lambda(\theta(v),I;\ell)$ as follows: 
\begin{align*}
\lambda(\theta(v),I;\ell)
\geq 
\lambda(\theta(v),I;|\ov{u_n}'|)
\geq 
\lambda(\theta(\ov{u_n}'),I_1;|\ov{u_n}'|)-L
\end{align*}
Using (\ref{eqn:4-1}) in the case of $\ell=|\ov{u_n}'|$, we see 
\begin{align*}
\lambda(\theta(v),I;\ell)
&\geq 
|\ov{u_n}'|\left(|I_1|-\frac{\epsilon}{4}\right)-L
\\
&= 
a_nb_n\left(|I_1|-\frac{\epsilon}{4}\right)-L
\\
&>
a_nb_n\left(|I|-\frac{\epsilon}{2}\right)-L
\geq 
a_nb_n\left(|I|-\frac{3\epsilon}{4}\right).
\end{align*}
Recall that 
$\lim_{n\to\infty}\beta'(n)=\infty$ by $(\beta(n))_{n\geq 1}\in \CC_3$. 
Noting that 
\[
\frac{a_nb_n}{\ell}\geq \frac{a_nb_n}{b_n(a_n+c_n)}
=\frac{1}{1+(\beta'(n))^{-1}},
\]
we obtain for every sufficiently large $n$ that 
\[
\lambda(\theta(v),I;\ell)>\ell(|I|-\epsilon).
\]
Similarly, applying Lemma \ref{lem:3-1}, we obtain for every sufficiently 
large $n$ that 
\begin{align*}
\lambda(\theta(v),I;\ell)
&\leq 
\lambda(\theta(v),I;|\ov{u_n}'|)+b_nc_n\\
&\leq 
\lambda(\theta(\ov{u_n}'),I_2;|\ov{u_n}'|)+L+b_nc_n
\\
&\leq 
a_nb_n\left(|I_2|+\frac{\epsilon}{4}\right)+L+b_nc_n
\\
&<
a_nb_n\left(|I|+\frac{\epsilon}{2}\right)+L+b_nc_n.
\end{align*}
Since 
\[
\lim_{n\to\infty}\frac{c_n}{a_n}=\lim_{n\to\infty}\frac{1}{\beta'(n)}=0,
\]
we deduce for every sufficiently large $n$ that 
\begin{align*}
\lambda(\theta(v),I;\ell)
<
a_nb_n(|I|+\epsilon)< \ell(|I|+\epsilon).
\end{align*}
\end{proof}
\begin{proof}[Proof of Lemma \ref{lem:4-2}]
Since $(\beta(n))_{n\geq 1}\in \CC\backslash \CC_3$, there exists 
a positive integer $M$ such that there exists infinitely many positive 
integers $n$ with $\beta'(n)=M$. In what follows, we assume that 
$\beta'(n)=M$. Put 
\begin{align*}
V_n:=\ov{u_1}' \ \ov{u_1}'' \ 
\ov{u_2}' \ \ov{u_2}'' \ \ldots \ov{u_n}' \ \ov{u_n}''.
\end{align*}
We take a sufficiently large integer $\ell$ depending only on $M$ so that 
\begin{align}\label{eqn:qqqq}
    \frac{1}{4(M+1)}>2^{2-\ell}.
\end{align}
Set 
\[
I:=\left[-\frac{1}{2^{\ell}} , \frac{1}{2^{\ell}} \right]
, \quad
I_1:=\left[-\frac{1}{2^{1+\ell}} , \frac{1}{2^{1+\ell}} \right].
\]
We estimate lower bounds for $\lambda(\pi(\beta),I;|V_n|)$. 
Putting $V_n=:V_n' \ov{u_n}''$, 
we see 
\begin{align*}
\lambda(\pi(\beta),I;|V_n|)
&\geq 
\lambda(\theta(p(\beta)),I;|V_n'|,|V_n|)\\
&=\lambda(\theta(\sigma^{|V_n'|}\circ p(\beta)),I;|\ov{u_n}''|)\\
&=\lambda(\theta(\sigma^{|V_n'|}\circ p(\beta)),I;b_nc_n)
\end{align*}
by Remark \ref{rem:3-1}. We denote by $\mathbf{0}$ 
the constant sequence of zeros in $\W$. 
Since 
\[
\sigma^{|V_n'|}\circ p(\beta)|_{|\ov{u_n}''|}=\ov{u_n}''=0^{b_nc_n},
\]
we get by Lemma \ref{lem:3-1} with $N=b_nc_n$ that 
\begin{align*}
\lambda(\pi(\beta),I;|V_n|)
&\geq 
-L+\lambda(\theta(\mathbf{0}),I_1;b_nc_n)\\
&=-L+b_nc_n=-L+\frac{a_nb_n}{M}
\geq 
\frac{a_nb_n}{2M}
\end{align*}
for every sufficiently large $n$, where $L$ is a positive constant depending only on $M$ because $I$ and $I_1$ can be constructed depending only on $M$. 
On the other hand, Assumption (C) implies that 
if $n$ is sufficiently large, then 
\begin{align*}
|V_n|= \sum_{j=1}^{n}(a_j+c_j) b_j\leq 2(a_n+c_n)b_n=
2\left(1+\frac1{M}\right)a_nb_n.
\end{align*}
Therefore, we obtain for infinitely many $n\geq 1$ that 
\begin{align*}
\lambda(\pi(\beta),I;|V_n|)
&\geq 
\frac{1}{4(M+1)}|V_n|> 2^{2-\ell}|V_n|=2|I|\cdot |V_n| 
\end{align*}
by (\ref{eqn:qqqq}).
Hence, we deduce that $\pi(\beta)\not\in \NN_{P,k}$. 
\end{proof}

\section{Open problems}

\begin{prob}
For all real numbers $\alpha$ with $|\alpha|>1$, the set $\Na$ is $\bP_3^0$-complete.
\end{prob}

\begin{prob}
Recall that $\Pisot$ is the set of Pisot numbers. Generalize the results of \cite{BecherHeiberSlamanAbsNormal,BecherSlamanNormal} and %show that $\bigcup_{\alpha \in \Pisot} \Na$ is $\bS_4^0$-complete and that $\bigcap_{\alpha \in \Pisot} \Na$ is $\bP_3^0$-complete. Moreover, 
show that $\bigcup_{\alpha\in \R, |\alpha| \in \Pisot} \Na$ is $\bS_4^0$-complete and that $\bigcap_{\alpha\in \R, |\alpha| \in \Pisot} \Na$ is $\bP_3^0$-complete. 
\begin{comment}
    I added $\alpha\in \R$ in order to omit $\alpha z$ where $z$ is an algebraic number with $|z|=1$. 
\end{comment}
\end{prob}

\begin{prob}
    Let $P(X)\in \mathbb{Z}[X]$ be a monic irreducible polynomial such that all zeros $\alpha_1,\ldots,\alpha_p$ of $P(X)$ with absolute values greater than one are real numbers. Assume that $p>1$. Let $a$ be a positive integer with $a<p$. Show that the set 
    $$\{(\xi_1,\ldots,\xi_a)\in \mathbb{R}^a\mid \xi_1 \alpha_1^n+\cdots+\xi_a\alpha_a^n \mbox{ is u.d. modulo one.}\}$$
    is $\bP_3^0(\R^a)$-complete.
    Note: If $\alpha_1=4+\sqrt{2},\alpha_2=4-\sqrt{2},$ then this problem reduces to the problem of showing that $\NS{4+\sqrt{2}}$ is $\bP_3^0(\R)$-complete.
\end{prob}

A set $D\subseteq X$ is in the class $D_2(\bP^0_3)$ if $D=A \backslash B$ where $A, B\in \bP^0_3$.
A set $D$ is {\it $D_2(\bP^0_3)$-hard} if $X \backslash D \notin D_2(\bP^0_3)$, and
$D$ is {\it $D_2(\bP^0_3)$-complete} if it is in $D_2(\bP^0_3)$ and is $D_2(\bP^0_3)$-hard.
As with the classes $\bS^0_\alpha$, $\bP^0_\alpha$, the class $D_2(\bP^0_3)$
has a universal set and so is non-selfdual, that is, it is not closed under
complements.

In \cite{JacksonManceVandehey} it was shown that the set of numbers normal with respect to the continued fraction expansion that aren't normal in base $b$ is $D_2(\bP^0_3)$-complete and for any pair of relatively prime integers $p,q \geq 2$, the set of numbers normal in base $p$, but not normal in base $q$ is $D_2(\bP^0_3)$-complete. Other similar results involving Cantor series expansions can be found in \cite{AireyJacksonManceComplexityCantorSeries}.

The set $A \backslash B$ being $D_2(\bP^0_3)$-complete can be thought of as a notion of independence between the sets $A$ and $B$. In particular, if $A \cap C=B$, then the set $C$ must not be a $\bS_3^0$ set. This eliminates many possible theorems and can be used to show existence of several types of numbers. For example, the set of badly approximable numbers is a $\bS_2^0$ set, so we immediately know that there exist numbers that are normal in base $3$ and badly approximable that are not normal in base $2$. 

A real number is called {\it essentially non-normal in base $b$} if none of the digits $0,1,\ldots,b-1$ have a frequency in its $b$-ary expansion. It is easy to see that the set of real numbers that are essentially non-normal in base $b$ is a $\bS_3^0$ set. Thus, we immediately know by \cite{JacksonManceVandehey} that there exist real numbers that are continued fraction normal and essentially non-normal in base $3$ that are not normal in base $2$.

%So, any theorem of the form of $(x \in A) \wedge P(x) \leftrightarrow x \in B$ would require $P(x)$ not to have ``lower complexity'' than that 
\begin{prob} 
Show that when $\Na \backslash \Nb \neq \emptyset$, we have that $\Na \backslash \Nb$ is $D_2(\bP_3^0)$-complete. For instance, it is known that $\NN (\sqrt{10})\subsetneq \NN(10)$. For every real number $\alpha>1$ and positive integers $r,s$, Brown, Moran, Pollington \cite{BrMoPo} gave a necessary and sufficient condition so that $\NN(\alpha^s)\subseteq \NN(\alpha^r)$. Moreover, Moran and Pollington \cite{MoPo} showed that if $\Na \subseteq \Nb$, then $(\log \alpha)/(\log \beta)$ is rational. 
\end{prob}

\begin{prob}
    Let $a,b$ be integers greater than one. Assume that $a$ and $b$ are multiplicatively independent. Put 
    $P_a(X)=X-a$ and $P_b(X)=X-b$. 
    Then for any nonnegative integer $k$, 
    $\NN_{P_a,k}\backslash\NN_{P_b,k}$
    is $D_2(\bP_3^0)$-complete.
\end{prob}

%\begin{thebibliography}{99}
\bibliographystyle{amsplain}

%\bibliography{mance}

\begin{thebibliography}{10}

\bibitem{DSDB20}
Dylan Airey, Steve Jackson, Dominik Kwietniak, and Bill Mance, \emph{Borel complexity of sets of normal numbers via generic points in subshifts with specification}, Trans. Amer. Math. Soc. \textbf{373} (2020), no.~7.

\bibitem{AireyJacksonManceComplexityCantorSeries}
Dylan Airey, Steve Jackson, and Bill Mance, \emph{Descriptive complexity in {C}antor series}, J. Symb. Log. \textbf{87} (2022), no.~3, 1023--1045. \MR{4472523}

\bibitem{AireyJacksonManceComplexityNormalPreserves}
\bysame, \emph{Some complexity results in the theory of normal numbers}, Canad. J. Math. \textbf{74} (2022), no.~1, 170--198. \MR{4379400}

\bibitem{AKK}
S.~Akiyama, T.~Kamae, and H.~Kaneko, \emph{Exponential diophantine approximation and symbolic dynamics}, Math. Z. \textbf{311} (2025), no.~4, Paper No. 70.

\bibitem{Ak-Ka:21}
S.~Akiyama and H.~Kaneko, \emph{Multiplicative analogue of {M}arkoff-{L}agrange spectrum and {P}isot numbers}, Adv. Math. \textbf{380} (2021), 107547, 39 p. \emph{Corrigendum}, \textbf{394} (2022), 107996.

\bibitem{BecherHeiberSlamanAbsNormal}
V.~Becher, P.~A. Heiber, and T.~A. Slaman, \emph{Normal numbers and the {B}orel hierarchy}, Fund. Math. \textbf{226} (2014), no.~1, 63--78.

\bibitem{BecherSlamanNormal}
V.~Becher and T.~A. Slaman, \emph{On the normality of numbers to different bases}, J. Lond. Math. Soc. (2) \textbf{90} (2014), no.~2, 472--494.

\bibitem{DescPoisson}
Ver\'onica Becher, Stephen Jackson, Dominik Kwietniak, and Bill Mance, \emph{The descriptive complexity of the set of {P}oisson generic numbers}, J. Math. Log. \textbf{25} (2025), no.~2, Paper No. 2450019, 18. \MR{4892016}

\bibitem{BrMoPo}
G.~Brown, W.~Moran, and A.~D. Pollington, \emph{Normality with respect to powers of a base}, Duke Math. J. \textbf{88} (1997), no.~2, 247--265.

\bibitem{BugeaudBook}
Y.~Bugeaud, \emph{Distribution modulo one and {D}iophantine approximation}, Cambridge University Press, Cambridge, 2012.

\bibitem{FH22}
Clemens Fuchs and Sebastian Heintze, \emph{On the growth of multi-recurrences}, Arch. Math. (Basel) \textbf{119} (2022), no.~5, 489--494.

\bibitem{BorelHurwitz}
Felipe Garc\'ia-Ramos, Gerardo Gonz\'alez~Robert, and Mumtaz Hussain, \emph{Transcendence and normality of complex numbers via {H}urwitz continued fractions}, Int. Math. Res. Not. IMRN (2024), no.~23, 14289--14320. \MR{4838302}

\bibitem{JacksonManceVandehey}
S.~Jackson, B.~Mance, and J.~Vandehey, \emph{On the {B}orel complexity of continued fraction normal, absolutely abnormal numbers}, arXiv$:$2111.11522.

\bibitem{Ka:09}
H.~Kaneko, \emph{Limit points of fractional parts of geometric sequences}, Unif. Distrib. Theory \textbf{4} (2009), no.~2, 1--37.

\bibitem{Kechris}
A.~Kechris, \emph{Classical descriptive set theory}, Graduate Texts in Mathematics, vol. 156, Springer-Verlag, New York, 1995.

\bibitem{KiLinton}
H.~Ki and T.~Linton, \emph{Normal numbers and subsets of {N} with given densities}, Fund. Math. \textbf{144} (1994), no.~2, 163--179.

\bibitem{Ko35}
J.~F. Koksma, \emph{Ein mengentheoretischer {S}atz \"uber die {G}leichverteilung modulo {E}ins}, Compositio Math. \textbf{2} (1935), 250--258.

\bibitem{MoPo}
William Moran and Andrew~D. Pollington, \emph{The discrimination theorem for normality to non-integer bases}, Israel J. Math. \textbf{100} (1997), 339--347.

\bibitem{ShTi1986}
T.~N. Shorey and R.~Tijdeman, \emph{Exponential {D}iophantine equations}, Cambridge Tracts in Mathematics, vol.~87, Cambridge University Press, Cambridge, 1986.

\bibitem{Wall}
D.~D. Wall, \emph{Normal numbers}, Ph.D. thesis, Univ. of California, Berkeley, Berkeley, California, 1949.

\bibitem{Weyl4}
H.~Weyl, \emph{\"{U}ber die {G}leichverteilung von {Z}ahlen mod. {E}ins}, Math. Ann. \textbf{77} (1916), 313--352.

\end{thebibliography}

\providecommand{\bysame}{\leavevmode\hbox to3em{\hrulefill}\thinspace}
\providecommand{\MR}{\relax\ifhmode\unskip\space\fi MR }
% \MRhref is called by the amsart/book/proc definition of \MR.
\providecommand{\MRhref}[2]{%
  \href{http://www.ams.org/mathscinet-getitem?mr=#1}{#2}
}
\providecommand{\href}[2]{#2}

\end{document}